\numberwithin{equation}{section}
\numberwithin{figure}{section}
\theoremstyle{plain}
\newtheorem{thm}{Theorem}[section]
\newtheorem{claim}[thm]{Claim}
\theoremstyle{remark}
\newtheorem{rmk}[thm]{Remark}
\newcommand{\M}{\operatorname{M}}
\newcommand{\Hf}{\operatorname{\textbf{H}}}
\newcommand{\PP}{\operatorname{PP}}
\begin{document}

\title{A Shuffling Theorem  for Centrally Symmetric Tilings}

\author{Tri Lai}
\address{Department of Mathematics, University of Nebraska -- Lincoln, Lincoln, NE 68588}
\email{tlai3@unl.edu}
\thanks{This research was supported in part  by Simons Foundation Collaboration Grant (\# 585923).}

\subjclass[2010]{05A15,  05B45}

\keywords{perfect matchings, plane partitions, lozenge tilings, dual graph,  graphical condensation.}

\date{\today}

\dedicatory{}

\begin{abstract}
Rohatgi and the author recently proved a shuffling theorem for lozenge tilings of `doubly--dented hexagons' (\texttt{arXiv:1905.08311}). The theorem can be considered as a hybrid between two classical theorems in the enumeration of tilings: MacMahon's theorem about centrally symmetric hexagons and Cohn--Larsen--Prop's theorem about semihexagons with dents. In this paper, we consider a similar shuffling theorem for the centrally symmetric tilings of the doubly--dented hexagons.  Our theorem also implies a conjecture posed by the author in \texttt{arXiv:1803.02792} about the enumeration of centrally symmetric tilings of hexagons with three arrays of triangular holes. This enumeration, in turn, can be considered as a common generalization of (a tiling-equivalent version of) Stanley's enumeration of self-complementary plane partitions and Ciucu's work on symmetries of the shamrock structure. Moreover, our enumeration also confirms a recent conjecture posed by Ciucu in \texttt{arXiv:1906.02951}.
\end{abstract}

\maketitle

\section{Introduction}

MacMahon's classical theorem  \cite{Mac} on plane partitions fitting in a given box is equivalent to the fact that the number of lozenge tilings of a centrally symmetric hexagon $Hex(a,b,c)$ of side-lengths $a,b,c,a,b,c$ (in the clockwise order from the north side\footnote{From now on we always list the side-lengths of a hexagon in the clockwise order, starting from the north side.}) is given by the simple product:
\begin{equation}\label{Maceq}
\PP(a,b,c):=\prod_{i=1}^{a}\prod_{j=1}^{b}\prod_{k=1}^{c}\frac{i+j+k-1}{i+j+k-2}.
\end{equation}

This formula was generalized by Cohn, Larsen and Propp \cite[Proposition 2.1]{CLP} when they presented a correspondence between lozenge tilings of a semihexagon with unit triangles removed on the base and semi-strict Gelfand--Tsetlin patterns. In particular, the \emph{(dented) semihexagon} $T_{a,b}(s_1,s_2,\dots,s_a)$ is the region obtained from the upper half of the symmetric hexagon of side-lengths $b,a,a,b,a,a$ by removing $a$ up-pointing unit triangles along the base at the positions $s_1,s_2,\dotsc,s_a$ from left to right (see Figure \ref{semihexmultiple} for an example).  The removed unit triangles are called the `\emph{dents}'. Cohn--Larsen--Propp's theorem says that the number of lozenge tilings of the dented semihexagon is given by
\begin{equation}\label{CLPeq}
\M(T_{a,b}(s_1,s_2,\dots,s_a))=\prod_{1\leq i<j \leq a}\frac{s_j-s_i}{j-i},
\end{equation}
where we use the notation $\M(R)$ for the number of lozenge tilings of the region $R$.

In \cite{shuffling}, Rohatgi and the author considered a hybrid object between MacMahon's hexagon and Cohn--Larsen--Propp's dented semihexagon. The region is a hexagon on the triangular lattice, like in the case of MacMahon's theorem,  with an arbitrary set of unit triangles removed along a horizontal axis, like the dents in Cohn--Larsen--Propp's theorem (illustrated in Figure \ref{multiplefernfig}). However, instead of removing only up-pointing unit triangles, we remove both up-pointing and down-pointing  unit triangles. We call this region a \emph{doubly-dented hexagon}. In general, the tiling number of such a region is not given by  a simple product formula. However, we showed that the tiling number only changes by a simple multiplicative factor when we shuffle the positions of up- and down-pointing removed unit triangles (see Theorem 1 in \cite{shuffling}).

For the completeness, let us present here the Shuffling Theorem in \cite[Theorem2.4]{shuffling}. Assume that $x,y,z,u,d,n$ are nonnegative integers, such that $u,d \leq n$. Consider a symmetric hexagon of side-lengths $x+n-u,y+u,y+d,x+n-d,y+d,y+u$\footnote{From now on, we always list the side-lengths of a hexagon in the clockwise order from the north side.}. We remove $u+d$ arbitrary unit triangles along the horizontal lattice line $l$ that contains the west and the east vertices of the hexagon. We call $l$ the \emph{axis} of the region. Assume further that, among these $u+d$ removed triangles, there are $u$ up-pointing ones and $d$ down-pointing ones. Denote respectively by $U=\{s_1,s_2,\dotsc,s_u\}$ and $D=\{t_1,t_2,\dots, t_d\}$ the sets of positions of the up-pointing and down-pointing removed unit triangles (ordered from left to right), such that $|U\cup D|=n$ (i.e., $U,D\subseteq [x+y+n]:=\{1,2,\dots,x+y+n\}$, $U$ and $D$ are \emph{not} necessarily disjoint).  We also allow the appearance of ``\emph{barriers}" at the positions in a set $B\subseteq [x+y+n]\setminus(U\cup D)$ along $\ell$, such that $|B|\leq x$ (a barrier is a unit horizontal lattice interval that is not allowed to be contained in a vertical lozenge of any tilings; see the red barriers in Fig.  \ref{multiplefernfig}; $B=\{6,13\}$ in this case). It means that we do not allow the appearance of vertical lozenges at the positions in $B$.  We use the notation $H_{x,y}(U;D;B)$ for the hexagon with the above setup of removed unit triangles and barriers. We call the resulting region a \emph{doubly--dented hexagon (with barriers)}. See Figure  \ref{multiplefernfig} for an example of the doubly-dented hexagon and a sample tiling of its.

We now `shuffle' and/or `flip' the up- and down-pointing unit triangles in the symmetric difference $U\Delta D$ to obtain new position sets $U'=\{s'_1,s'_2,\dots,s'_{u'}\}$ and $D'=\{t'_1,t'_2,\dots,t'_{d'}\}$ for the up-pointing removed triangles and the down-pointing removed triangles. In particular, $U\cup D =U'\cup D'$ and $U\cap D =U'\cap D'$ ($U$ and $U'$ and $D$ and $D'$ may have different sizes). The following theorem shows that the shuffling and flipping of removed triangles only changes the tiling number  by a simple multiplicative factor. Moreover, the factor can be written in a similar form to Cohn--Larsen--Propp's formula (i.e. the product on the right-hand side of equation \ref{CLPeq}).

\begin{figure}\centering
\includegraphics[width=13cm]{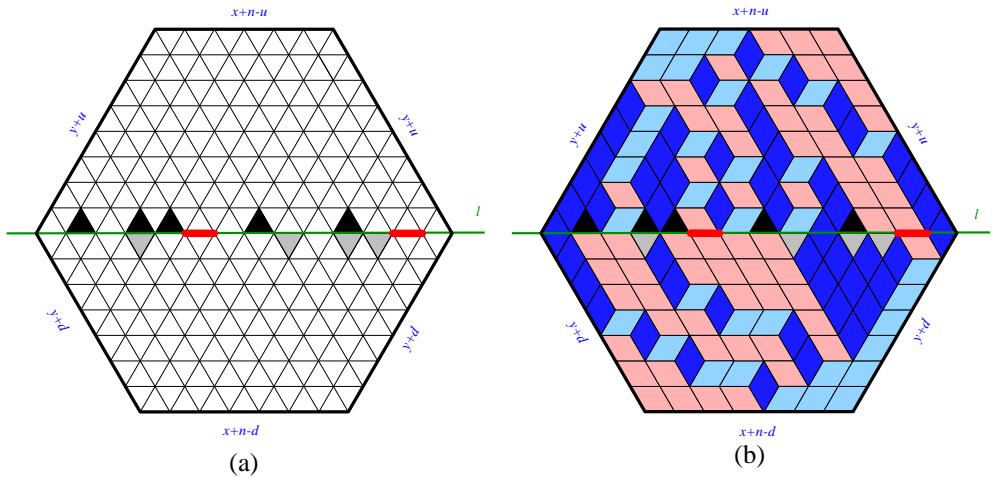}
\caption{(a) The doubly--dented hexagon $H_{4,3}(2,4,5,8,11;\ 4,9,11,12;\ 6,13)$ and (b) a lozenge tiling of its. The black and shaded triangles indicate the unit triangles removed.}\label{multiplefernfig}
\end{figure}

\begin{thm}[Shuffling Theorem, Theorem 2.4 in \cite{shuffling}]\label{factorization} For nonnegative integers $x,y,n,u,d,u',d'$  ($u,d,u',d'\leq n$) and five ordered subsets $U=\{s_1,s_2,\dotsc,s_u\}$,  $D=\{t_1,t_2,\dots, t_d\}$, $U'=\{s'_1,s'_2,\dotsc,s'_{u'}\}$,  and $B=\{k_1,k_2,\dots,k_b\}$ of $[x+y+n]$,  such that $U\cup D =U'\cup D'$, $U\cap D =U'\cap D'$, $B\cap (U\cup D)=\emptyset$, and $b=|B|\leq x$. Then

\begin{equation}\label{genmaineq2}
  \frac{\M(H_{x,y}(U;D;B))}{\M(H_{x,y}(U';D';B))}= \frac{\displaystyle\prod_{1\leq i <j\leq u}\frac{s_j-s_i}{j-i}\displaystyle\prod_{1\leq i <j\leq d}\frac{t_j-t_i}{j-i}\PP(u,d,y)}{\displaystyle\prod_{1\leq i <j\leq u'}\frac{s'_j-s'_i}{j-i}\displaystyle\prod_{1\leq i <j\leq d'}\frac{t'_j-t'_i}{j-i}\PP(u'd',y)}.
\end{equation}
\end{thm}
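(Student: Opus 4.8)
The plan is to pass to perfect matchings and prove the identity by Kuo's graphical condensation, reducing the general shuffle to a single elementary move via a telescoping argument.

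First I would reformulate the statement. Using the standard bijection between lozenge tilings of a region and perfect matchings of its planar dual graph, write $\M(R)=\M(G(R))$, where a barrier at a position of $B$ is encoded by deleting in $G(R)$ the edge representing the vertical lozenge at that position. Writing $F(U,D)$ for the product appearing in the numerator of the right-hand side of (\ref{genmaineq2}), the claim is precisely that $\M(H_{x,y}(U;D;B))/F(U,D)$ is unchanged under shuffling, i.e.\ that it depends only on the invariants $U\cup D$, $U\cap D$, and $B$. Any two admissible configurations sharing these invariants are joined by a chain of elementary flips, each moving a single position $p\in U\Delta D$ from up-type to down-type (so that $u\mapsto u-1$, $d\mapsto d+1$ and the hexagon frame is reshaped accordingly, the north side growing while the northeast and northwest sides shrink). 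Since $F$ telescopes along such a chain, it suffices to establish (\ref{genmaineq2}) when $(U',D')$ is a single elementary flip of $(U,D)$.

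For the elementary flip I would run Kuo condensation. Choosing two of the distinguished vertices to be the up- and down-pointing unit triangles meeting the axis $\ell$ at the flip position $p$, and the other two to be forced triangles near the west and east corners on $\ell$, the condensation yields a three-term relation whose six regions are again doubly--dented hexagons of the same family. Running an induction on $n$ (the total number of removed triangles), together with an inner induction to absorb the reshaping of the frame and the barriers, the ratios $\M/F$ for all the auxiliary regions are known by the inductive hypothesis, so the relation pins down the single-flip ratio $\M(H_{x,y}(U;D;B))/\M(H_{x,y}(U';D';B))$. The base of the induction is the configuration with no single-type dents, or with $y$ minimal, where forced lozenges split every tiling into a top dented semihexagon, a bottom dented semihexagon, and a central box; formulas (\ref{CLPeq}) and (\ref{Maceq}) then evaluate $\M$ as exactly $F$ times the tiling number of a fixed reference region, which establishes the desired invariance at the base.

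It then remains to verify that the ratio produced by the recurrence equals the single-flip value of $F(U,D)/F(U',D')$, a finite identity among the Cohn--Larsen--Propp products $\prod (s_j-s_i)/(j-i)$ and the MacMahon values $\PP$, which I would check directly. I expect the main obstacle to be the geometry of the condensation step rather than this algebra: one must choose the distinguished vertices so that all six regions appearing are honest doubly--dented hexagons with the correct reshaped frame, admissible parameters $u,d\le n$, and barriers intact, and so that the recurrence genuinely closes; once the combinatorial setup is correct, confirming that $F$ satisfies the same recurrence is routine. The presence of barriers, which break the clean semihexagon structure, is the most delicate point both in setting up the condensation and in carrying the induction through.
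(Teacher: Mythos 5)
First, a point of bookkeeping: Theorem \ref{factorization} is not proved in this paper at all --- it is imported verbatim from \cite{shuffling}, and the only proof given here is for its centrally symmetric analogue, Theorem \ref{centralfactor}, in Section 4. So your proposal can only be measured against the strategy of \cite{shuffling} and against the closely parallel argument this paper gives for Theorem \ref{centralfactor}. Your overall template --- Kuo condensation, an induction whose base cases degenerate via forced lozenges into Cohn--Larsen--Propp semihexagons, and a routine algebraic check that the product $F(U,D)$ satisfies the same recurrence --- is the right family of argument, and your telescoping reduction to a single elementary flip is sound as far as it goes (each flip preserves $U\cup D$ and $U\cap D$, and $F$ is intrinsic to $(U,D)$, so the general ratio factors through single flips).

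The genuine gap is in the condensation step. You place two of the four distinguished vertices at the up- and down-pointing unit triangles of the flip position $p$, an interior point of the axis $\ell$. Kuo's four-vertex condensation \cite{Kuo} requires the distinguished vertices to lie on a common face of the planar dual graph (in practice the outer face), alternating in color; two triangles at an interior axis position do not lie on the outer face, so the three-term identity you invoke is simply not available for that choice. (The axis triangles used in the proof of Theorem \ref{centralfactor} are legitimate only because they sit at the ends of unbroken chains of obstacles reaching the boundary or the central face, which merges their faces with the outer or central face.) There is also a structural obstruction: when $p$ flips from $U$ to $D$ the frame of the hexagon itself changes (the north side grows while the adjacent sides shrink), so $H_{x,y}(U;D;B)$ and $H_{x,y}(U';D';B)$ are subregions of two \emph{different} hexagons and cannot both be realized as vertex-deleted subgraphs of one graph $G$ in the way a single condensation identity comparing them would require. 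What the actual proofs do instead is anchor all distinguished triangles to the outer face --- at the corners of the hexagon and at the extreme free positions of the axis --- so that condensation yields a recurrence such as (\ref{recurrence1}) that strictly decreases $x+y+u+d$; the induction is then run on that quantity for the full two-configuration statement (\ref{maineqrefine}) at once, with the compatibility of the correction factors checked by elementary cancellations as in Claim \ref{clm1}. If you reroute your argument through such a boundary-anchored recurrence (at which point the elementary-flip reduction becomes unnecessary), the rest of your outline goes through.
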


Motivated by Stanley's classical paper \cite{Stanley} on symmetric plane partitions, one would like to investigate symmetric tilings of  the doubly-dented hexagons. There are two natural classes of symmetric tilings: reflectively symmetric tilings (the tilings which are invariant under a refection over the symmetry axis) and  centrally symmetric tilings (the tilings which are invariant under a $180^{\circ}$ rotation around the symmetry center)\footnote{These types of symmetric tilings correspond to the transposed-complementary and self-complimentary plane partitions, respectively.}. While a shuffling theorem for the first symmetry class is considered in \cite{shuffling2}, in this paper, we investigate shuffling theorems for the second symmetry class.

\begin{figure}\centering
\includegraphics[width=13cm]{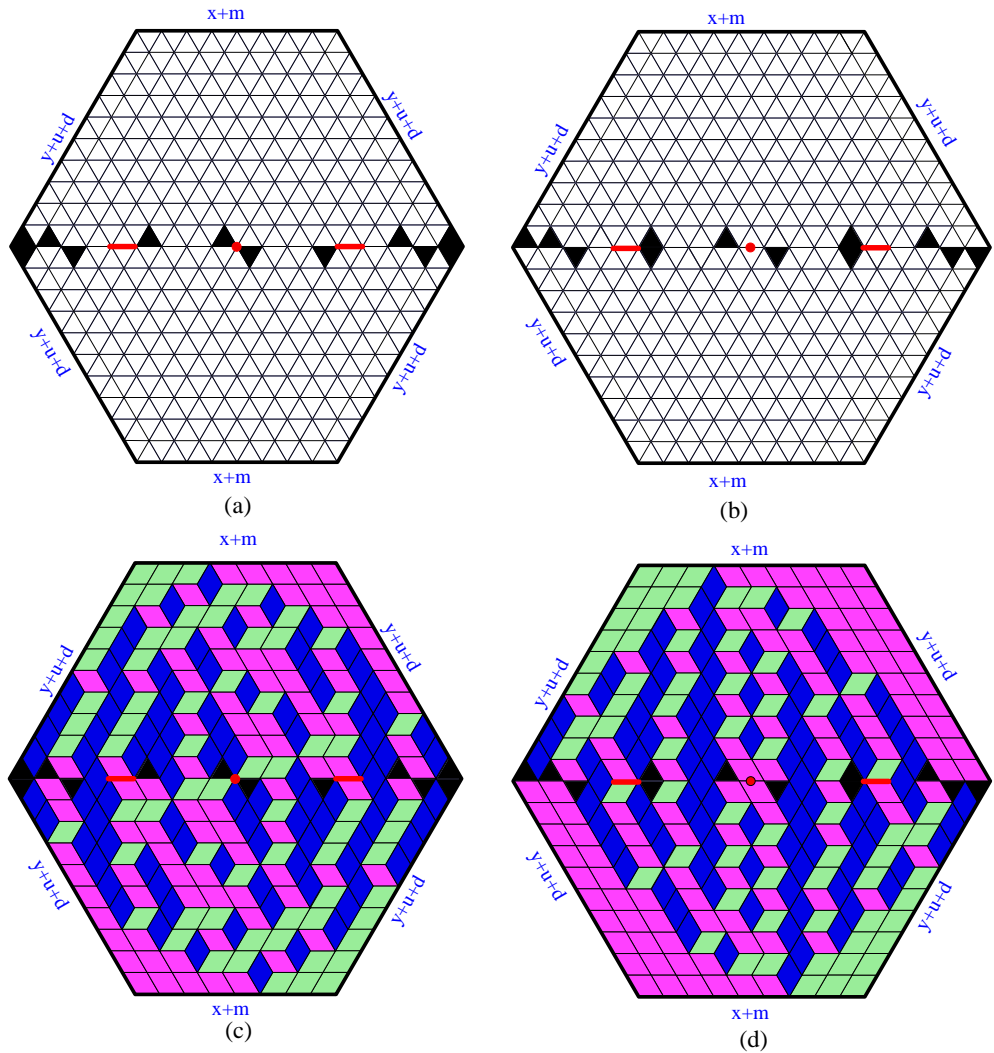}
\caption{(a) The centrally symmetric doubly-dented hexagon $CS_{2,2}(1,2,6,9;\ 1,3;\ 5)$; the dot indicates the center of the region. (b) The centrally symmetric doubly-dented hexagon $CS_{3,2}(1,2,6,9;\ 3,6;\ 5)$. (c) A centrally symmetric tiling of the region in picture (a). (d) A centrally symmetric tiling of the region in picture (b).}\label{Symmetrictiling}
\end{figure}

\medskip

We now consider a centrally symmetric doubly--dented hexagon with a set of barriers as follows. Consider the symmetric hexagon of side-lengths $x+2n-u-d,y+u+d,y+u+d,x+2n-u-d,y+u+d,y+u+d$. We are removing $u+d$ up-pointing triangles and $u+d$ down-pointing triangles that are located symmetrically at $2n$ positions along the horizontal axis $l$. We are also placing $2b$ barriers symmetrically along $l$.  The index sets of removed up-pointing triangles, removed down-pointing triangles, and barriers are respectively $U\cup((x+y+2n+1)-D)$, $D\cup ((x+y+2n+1)-U)$, $B\cup ((x+y+2n+1)-B)$, for some index sets $U=\{s_1<s_2<\cdots<s_u\}$, $D=\{t_1<t_2<\cdots<t_d\}$, $B=\{k_1<k_2<\cdots<k_b\}\subseteq \{1,2,\dots, \left\lceil \frac{x+y+n}{2}\right\rceil\}$, such that $B\cap (U\cup D)=\emptyset$ and $2b\leq x$. Here, for an index set $W$, we denote $(x+y+2n+1)-W:=\{x+y+2n+1-w:\ w \in W\}$.  Denote by $CS_{x,y}(U; D;B)$ the (centrally symmetric) doubly-dented hexagon
\[H_{x,y}(U\cup((x+y+2n+1)-D);\  D\cup ((x+y+2n+1)-U);\  B\cup ((x+y+2n+1)-B)).\]
We see that the set of removed unit triangles and the set of barrier in $CS_{x,y}(U; D;B)$ are invariant under the central symmetry. 
By the symmetry, one readily sees that $2n-u-d=m:=|U\Delta D|$.  Figures \ref{Symmetrictiling}(a) and (b) show respectively a $CS$-type region in the case when the symmetry center is a lattice point and in the case the symmetry center is not a lattice point. Figures \ref{Symmetrictiling}(c) and (c) show a centrally symmetric tiling of the $CS$-type regions in Figures \ref{Symmetrictiling}(a) and (b), respectively.

We have the following shuffling theorem for centrally symmetric tilings of the $CS$-type regions.
\begin{thm}[Shuffling Theorem for Centrally Symmetric Tilings]\label{centralfactor} Assume that $x,y,n,u,d,u',d'$ $U=\{s_1<s_2<\cdots<s_u\}$ are nonnegative integers and that $D=\{t_1<t_2<\cdots<t_d\}$, $B=\{k_1<k_2<\cdots<k_b\}$, $U'= \{s'_1<s'_2<\cdots<s'_{u'}\}$, $D'=\{t'_1<t'_2<\cdots<t'_{d'}\}$, $B=\{k_1<k_2<\cdots<k_b\}$ are five oredered subsets of $\{1,2,\dots,\left\lceil \frac{x+y+n}{2}\right\rceil\}$, such that $U\cup D=U'\cup D'$, $U\cap D=U'\cap D'$, $B\cap (U\cup D)=\emptyset$, and $2b\leq x$. We have
\begin{align}\label{genmaineq3}
  \frac{\M_c(CS_{x,y}(U;D;B))}{\M_c(CS_{x,y}(U';D';B))}&=\sqrt{\frac{\M(CS_{x,y}(U;D;B))}{\M(CS_{x,y}(U';D';B))}}\notag\\
  &=\frac{\Delta(U\cup((x+y+2n+1)-D))}{\Delta(U'\cup ((x+y+2n+1)-D'))},
\end{align}
where $\M_c(R)$ denotes the number of centrally symmetric tilings of region $R$ and where, for a finite ordered set $S=\{x_1<x_2<\cdots<x_n\}$, we define $\Delta(S):=\prod_{1\leq i<j\leq n}(x_j-x_i)$.
\end{thm}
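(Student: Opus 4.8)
The plan is to separate the claimed chain of equalities into two logically independent parts: the \emph{arithmetic step} $\sqrt{\M(CS_{x,y}(U;D;B))/\M(CS_{x,y}(U';D';B))}=\Delta(\widehat U)/\Delta(\widehat U')$, and the \emph{symmetry step} $\M_c(CS_{x,y}(U;D;B))/\M_c(CS_{x,y}(U';D';B))=\Delta(\widehat U)/\Delta(\widehat U')$, where I abbreviate $\widehat U=U\cup((x+y+2n+1)-D)$, $\widehat D=D\cup((x+y+2n+1)-U)$, $\widehat B=B\cup((x+y+2n+1)-B)$. The arithmetic step I would get for free from Theorem~\ref{factorization} applied to $CS_{x,y}(U;D;B)=H_{x,y}(\widehat U;\widehat D;\widehat B)$. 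Since $|U|+|D|=|U'|+|D'|$, the up-- and down--index sets all share the common cardinality $u+d$, so the factors $\PP(u+d,u+d,y)$ cancel in the ratio, as do the normalizing denominators $\prod_{i<j}(j-i)$, which depend only on that fixed cardinality. The one structural remark needed is that $\Delta$ is invariant under the global reflection $w\mapsto (x+y+2n+1)-w$; since $(x+y+2n+1)-\widehat U=((x+y+2n+1)-U)\cup D=\widehat D$, this gives $\Delta(\widehat D)=\Delta(\widehat U)$ and $\Delta(\widehat D')=\Delta(\widehat U')$. Theorem~\ref{factorization} then reads $\M(CS_{x,y}(U;D;B))/\M(CS_{x,y}(U';D';B))=(\Delta(\widehat U)/\Delta(\widehat U'))^{2}$, and taking the positive square root (tiling numbers are positive) finishes this step.

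The symmetry step is the real content, and it suffices to prove $\M_c(CS_{x,y}(U;D;B))=\Delta(\widehat U)\cdot c(x,y,n)$ with a factor $c$ unchanged by shuffling. I would obtain this by cutting the region along its axis $l$ through the center. A centrally symmetric tiling is determined by its restriction to the closed upper half $R^{+}$ together with the $\rho$--invariant family of vertical lozenges that straddle $l$; deleting from $R^{+}$ the up--pointing triangles consumed by those crossing lozenges turns $R^{+}$ into a dented semihexagon whose base dents are recorded by $\widehat U$. Collapsing the $\rho$--symmetry then identifies $\M_c$ with the tiling number of a single doubly--dented half--region $Q$ whose dent positions encode $\widehat U$ on one side of the center. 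Applying the already--established Theorem~\ref{factorization} to $Q$ (or, in the pure up--triangle situation, Cohn--Larsen--Propp's formula~\eqref{CLPeq}) returns exactly the ratio $\Delta(\widehat U)/\Delta(\widehat U')$, all other factors depending only on $x,y,n$. Combined with the arithmetic step this closes the chain.

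The hard part will be making this cut--and--quotient reduction rigorous. The crossing lozenges form a $\rho$--invariant set whose admissible configurations must be summed, and I must control their interaction with the barriers on $B\cup((x+y+2n+1)-B)$ and, most delicately, with the central position, whose behaviour depends on whether the symmetry center is a lattice vertex or an edge midpoint, i.e.\ on the parity of $x+y$. To avoid the awkward summation over crossing sets, I expect to replace the static bijection by a \emph{centrally symmetric graphical condensation}: an adaptation of Kuo's identity to $\rho$--invariant matchings, run by induction on $|U\Delta D|$, that reduces an arbitrary shuffle to elementary transpositions and flips of a single up/down pair. For each elementary move one checks that $\M_c$ is multiplied by the corresponding ratio of consecutive differences appearing in $\Delta(\widehat U)$, matching the recurrence satisfied by the Vandermonde product; the subtle base cases are those where a dent lies at or adjacent to the center. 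Establishing that the condensation identity survives the presence of barriers and handling the two center--parity cases uniformly is where I anticipate the bulk of the technical effort.
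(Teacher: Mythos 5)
Your ``arithmetic step'' is correct, and it is in fact the (implicit) way the first equality of \eqref{genmaineq3} follows in the paper: writing $CS_{x,y}(U;D;B)=H_{x,y}(\widehat U;\widehat D;\widehat B)$ with $\widehat U=U\cup((x+y+2n+1)-D)$, etc., noting that $u+d=u'+d'$ so the $\PP$--factors and the normalizing products $\prod_{i<j}(j-i)$ cancel in Theorem \ref{factorization}, and using the reflection--invariance of $\Delta$ to get $\Delta(\widehat D)=\Delta(\widehat U)$, hence $\M(CS_{x,y}(U;D;B))/\M(CS_{x,y}(U';D';B))=\bigl(\Delta(\widehat U)/\Delta(\widehat U')\bigr)^{2}$. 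Your ``symmetry step'' also names the right tool --- a Kuo condensation adapted to centrally symmetric matchings, which is exactly Ciucu's identity quoted as Theorems \ref{ciukuo1} and \ref{ciukuo2} --- and your axis--cutting picture is precisely how the paper treats the degenerate situations. So in outline you are on the paper's route.

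The gap is in how you propose to run the induction, and it is not a cosmetic one. The symmetric condensation identity does not implement an ``elementary transposition or flip of a single up/down pair'': applied to the dual graph of $CS_{x,y}(U;D;B)$ with the six boundary vertices at the corners and the central pair on the axis, it yields a three--term quadratic recurrence such as \eqref{recurrence1}, relating $\M_c$ of the region to $\M_c$ of six regions with \emph{strictly smaller} $x+y+u+d$ and with modified dent sets (positions $1$, $\alpha$, $\beta$ added or deleted and indices shifted), not to a reshuffled copy of the same region. So the induction has to be on $x+y+u+d$ rather than on $|U\Delta D|$; the base cases are the degenerate sizes $u+d=0$, $y\le 1$, $x\le 2b+1$ (where every symmetric tiling splits along the axis into two semihexagon tilings and Cohn--Larsen--Propp's formula \eqref{CLPeq} applies), not configurations with a dent near the center; and the heart of the argument --- absent from your sketch --- is the verification that the ansatz $\M_c(CS_{x,y}(U;D;B))=\frac{\Delta(\widehat U)}{\Delta(\widehat U')}\M_c(CS_{x,y}(U';D';B))$ satisfies the same recurrence, which reduces to three Vandermonde--product identities of the type \eqref{ratio1}--\eqref{ratio3}, checked separately in the four cases $1\in U\setminus D$, $1\in D\setminus U$, $1\notin U\cup D$, $1\in U\cap D$ (with the odd--$x+y$ case requiring Theorem \ref{ciukuo2} and extra care at the central position). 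Until the recurrence is set up in this form and those product identities are proved, the step you yourself call ``the real content'' is a plan rather than a proof.
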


Our theorem also implies an exact enumeration of centrally symmetric tilings of hexagons with three ferns removed, which confirms the author's conjecture in \cite{Threefern}.  As a hexagon with three ferns removed is a common generalization of a centrally symmetric hexagon $Hex(x,y,z)$ and the centrally symmetric hexagon with two triangles removed from the center (the latter is denoted by $B_{x,y,z,k}$ or $B'_{x,y,z,k}$ in \cite{Ciu3}; see Figures \ref{figsymfern2}(b) and \ref{figsymfern3}(b) for examples), our theorem in turn can be considered as a common generalization of Stanley's enumeration of self-complementary plane partitions (equivalent, centrally symmetric tilings of  a hexagon) \cite{Stanley} and Ciucu's work on the symmetries of the so-called `\emph{shamrock}' in \cite{Ciu3}. We refer the reader to e.g. \cite{Andrews, Kup, Stem, Krat3, Zeil} and the lists of references therein for more work about symmetric plane partitions. After posting the initial version of this paper on \texttt{arXiv.org}, the author has learned that our main theorem also gives a confirmation for a recent conjecture of Ciucu (Conjecture 4 in \cite{Ciu4}).

The rest of the paper is organized as follows. In Section 2, we consider several applications of our main theorem to the new type of holes, called `\emph{fern}',  that has been studied recently by Ciucu and by the author.  In Section 3, we present a special version of Kuo condensation developed by Ciucu in \cite{Ciu3}. This result helps us create recurrences in our inductive proofs of the main theorem in Section 4. 

\section{Enumeration of hexagons with three ferns removed}

In \cite{Threefern}, the author gives exact enumeration for lozenge tilings of  hexagons with three chains of triangles with alternative orientations removed. This chain is called a `\emph{fern}'. The fern structure has been investigated recently by Ciucu and the author (see e.g. \cite{Ciu1, Twofern, Threefern, Threefern2, Halfhex1, Halfhex2, Halfhex3}\footnote{In these papers, a fern was encoded as a sequence of side-lengths of the triangles that it contains.}). The author also conjectures the existence of a simple product formula for the number of centrally symmetric tilings of a hexagons with three ferns removed (Conjecture 4.1 in \cite{Threefern}). In this section, we show that our main theorem (Theorem \ref{centralfactor}) implies the conjecture.

In the rest of this section, we restrict ourself in the special case when the up-index set $U$ and down-index set $D$ of the doubly-dented hexagon $H_{x,y}(U;D;B)$ are disjoint and there are no barriers, i.e. $U\cap D=\emptyset$ and $B=\emptyset$. We now assume that the set of removed unit triangles in the doubly-dented hexagon is partitioned into $k$ disjoint `\emph{clusters}' (i.e. chains of contiguous removed unit triangles).  Denote by $C_1,C_2,\dotsc,C_k$ these clusters and the distances between them are $d_1,d_2,\dotsc,d_{k-1}$ ($d_i>0$, for $i=1,2,\dots,k-1$) as they appear from left to right.  For the sake of convenience, we always assume that $C_1$ is attaching to the west vertex of the hexagon, that $C_k$ is attaching to the east vertex of the hexagon, and that $C_1$ and $C_k$ may be empty.   As we are assuming $U\cap D=\emptyset$, each cluster can be partitioned into maximal sequences of unit triangles of the same orientation. We call these sequences \emph{up-intervals} or \emph{down-intervals} based on the orientation of the triangles that they contain. In each cluster $C_i$, by removing forced vertical lozenges right above each up-interval and right below each down-interval whenever they contain more than one unit triangles, we obtain a chain $F_i$ of triangular holes of alternating orientations, i.e a `fern'. We use the notation $R_{x,y}(F_1,\dots,F_k|\  d_1,\dots,d_{k-1})$ for the hexagon with the $k$ ferns $F_i$'s removed (see Fig. \ref{fern} for an example; the black unit triangles indicate the unit triangles removed).

\begin{figure}\centering
\includegraphics[width=8cm]{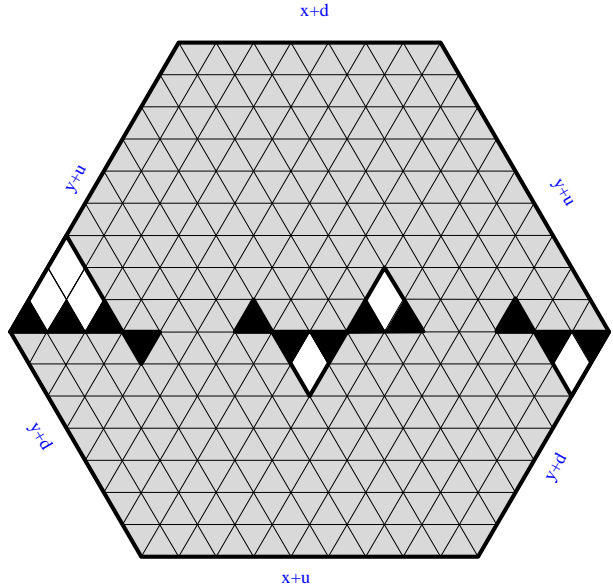}
\caption{Obtaining a hexagon with ferns removed $R_{x,y}(F_1,\dots,F_k|\  d_1,\dots,d_{k-1})$ from a doubly-dented hexagon.}\label{fern}
\end{figure}

We now shuffle and flip unit triangles \emph{internally} for each cluster $C_i$ to obtain a new cluster $C'_i$, for $i=1,2,\dots,k$. The removal of forced lozenges along the new clusters $C'_i$'s yields  new ferns $F'_i$'s. This way we get a new hexagon with ferns removed $R_{x,y}(F'_1,\dots,F'_k|\  d_1,\dots,d_{k-1})$. One notes that the only constrain here between the two ferns $F_i$ and $F'_i$ is that they have the same  \emph{total length} (the total length of a fern is the sum of side-lengths of triangles in the fern).

One can define the region $R_{x,y}(F_1,\dots,F_k|\  d_1,\dots,d_{k-1})$ directly as follows. We start with a hexagon of side-lengths $x+d,y+u,y+d,x+u,y+d,y+u$, where $u,d$ are the sum of side-lengths of the up-pointing and down-pointing triangles in the $k$ ferns $F_i$'s. We then remove the $k$ ferns $F_1,F_2,\dots,F_k$ from the hexagon along the horizontal axis $l$ containing its west and east vertices, such that the distances between two consecutive ferns are $d_1,d_2,\dots,d_{k-1}$ as appear from left to right, and that $F_1$ is attaching to the west vertex of the hexagon and $F_k$ is attaching to the east vertex.

For a given sequence, $\textbf{a}:=(a_1,a_2,\dots, a_n)$, we set
\[o_a:=a_1+a_3+a_5+\cdots.\]
\[e_a:=a_2+a_4+a_6+\cdots,\]
and
\[a:=a_1+a_2+a_3+\cdots.\]
Next, we define the \emph{generalized dented semihexagon} $S(a_1,a_2,\dotsc,a_n)$ as follows. We start with a trapezoidal region on the triangular lattice of side-lengths $e_a, o_a, a, o_a$ (in clockwise order, from the north side). We remove from the base of the region triangles of side-length $a_{2i-1}$'s such that the first one is touching the west vertex of the region and the distances between two consecutive removed triangles are $a_{2i}$'s (see the shaded region in Figure \ref{semihexmultiple} for an example). We also call these removed triangles \emph{dents}. One notes that the structures of the dents in $S(a_1,a_2,\dotsc,a_n)$ determine all four sides of the semihexagon.

Each generalized dented semihexagon $S(a_1,a_2,\dots,a_m)$ is obtained from the (original) dented semihexagon
\[T_{o_a,e_a}\left( \bigcup_{i\geq 1} \left[\sum_{j=1}^{2i-1}a_j+1,\sum_{j=1}^{2i}a_j\right] \right)\]
 in Cohn--Larsen--Propp's formula by removing  forced lozenges as in Figure \ref{semihexmultiple}, where $[a,b]:=\{a,a+1,\dots,b-1,b\}$ for integers $a\leq b$. Let $s(a_1,a_2,\dotsc,a_n)$ denote the number of lozenge tilings of $S(a_1,a_2,\dotsc,a_n)$. From Cohn--Larsen--Propp's formula (\ref{CLPeq}), we get
\begin{align}\label{semieq}
s(a_1,a_2,\dots,a_{2l-1})&=s(a_1,a_2,\dots,a_{2l})\notag\\
&=\dfrac{1}{\Hf(a_1+a_{3}+a_{5}+\cdots+a_{2l-1})}\notag\\
&\,\,\,\times\dfrac{\prod_{\substack{1\leq i\leq j\leq 2l-1,\,\text{$j-i+1$ odd}}}\Hf(a_i+a_{i+1}+\cdots+a_{j})}{\prod_{\substack{1\leq i\leq j\leq 2l-1,\,\text{$j-i+1$ even}}}\Hf(a_i+a_{i+1}+\cdots+a_{j})},
\end{align}
where the `\emph{hyperfactorial function}' $\Hf(n)$ is defined as $\Hf(n):=0!1!2!\cdots(n-1)!$.
The first equality holds due to forced lozenges in the tilings of $S(a_1,a_2,\dotsc,a_{2l})$, after whose removal one is left precisely with the region $S(a_1,a_2,\dotsc,a_{2l-1})$.

\begin{figure}\centering
\setlength{\unitlength}{3947sp}%
\begingroup\makeatletter\ifx\SetFigFont\undefined%
\gdef\SetFigFont#1#2#3#4#5{%
  \reset@font\fontsize{#1}{#2pt}%
  \fontfamily{#3}\fontseries{#4}\fontshape{#5}%
  \selectfont}%
\fi\endgroup%
\resizebox{8cm}{!}{
\begin{picture}(0,0)%
\includegraphics{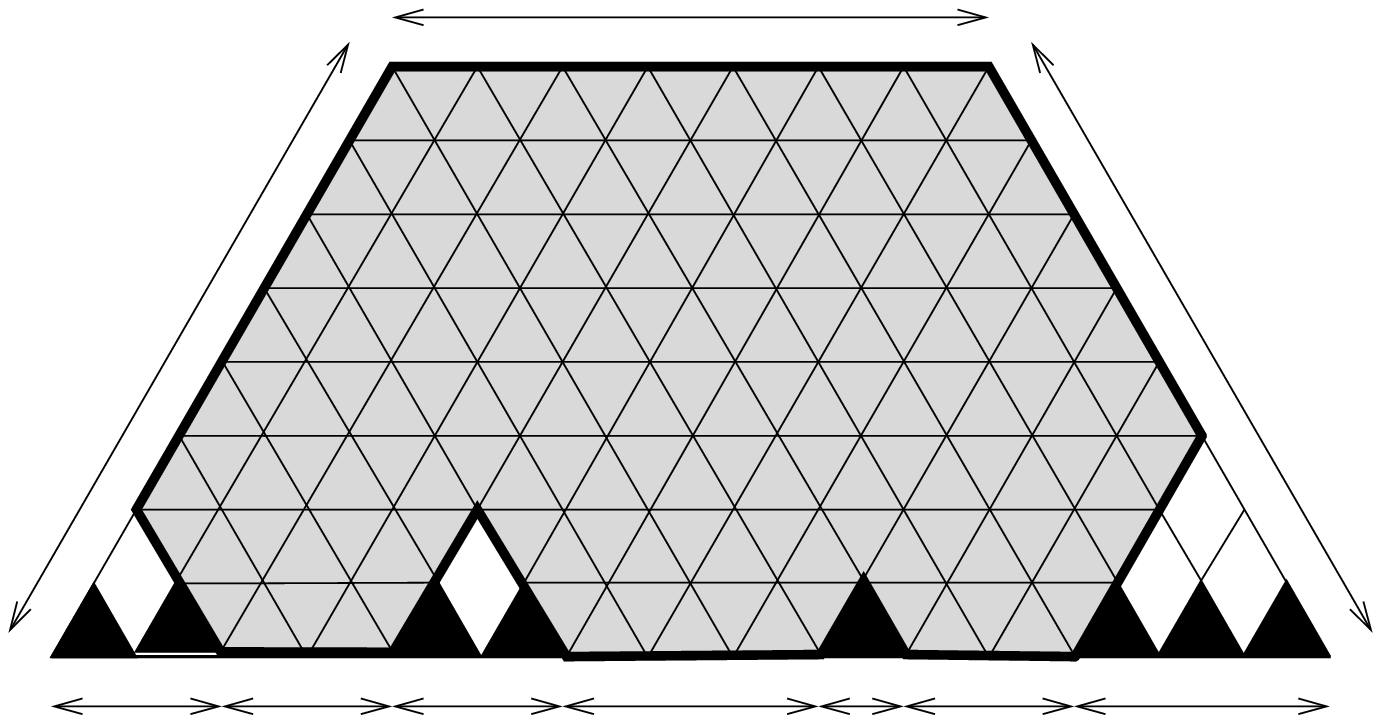}%
\end{picture}%
%
%

\begin{picture}(6833,4101)(1915,-5731)
\put(4741,-1911){\makebox(0,0)[lb]{\smash{{\SetFigFont{14}{16.8}{\rmdefault}{\mddefault}{\updefault}{\color[rgb]{0,0,0}$a_2+a_4+a_6$}%
}}}}
\put(7303,-2601){\rotatebox{300.0}{\makebox(0,0)[lb]{\smash{{\SetFigFont{14}{16.8}{\rmdefault}{\mddefault}{\updefault}{\color[rgb]{0,0,0}$a_1+a_3+a_5+a_7$}%
}}}}}
\put(2161,-4601){\rotatebox{60.0}{\makebox(0,0)[lb]{\smash{{\SetFigFont{14}{16.8}{\rmdefault}{\mddefault}{\updefault}{\color[rgb]{0,0,0}$a_1+a_3+a_5+a_7$}%
}}}}}
\put(2660,-5716){\makebox(0,0)[lb]{\smash{{\SetFigFont{14}{16.8}{\rmdefault}{\mddefault}{\updefault}{\color[rgb]{0,0,0}$a_1$}%
}}}}
\put(3351,-5711){\makebox(0,0)[lb]{\smash{{\SetFigFont{14}{16.8}{\rmdefault}{\mddefault}{\updefault}{\color[rgb]{0,0,0}$a_2$}%
}}}}
\put(4101,-5681){\makebox(0,0)[lb]{\smash{{\SetFigFont{14}{16.8}{\rmdefault}{\mddefault}{\updefault}{\color[rgb]{0,0,0}$a_3$}%
}}}}
\put(5061,-5681){\makebox(0,0)[lb]{\smash{{\SetFigFont{14}{16.8}{\rmdefault}{\mddefault}{\updefault}{\color[rgb]{0,0,0}$a_4$}%
}}}}
\put(5971,-5691){\makebox(0,0)[lb]{\smash{{\SetFigFont{14}{16.8}{\rmdefault}{\mddefault}{\updefault}{\color[rgb]{0,0,0}$a_5$}%
}}}}
\put(6621,-5681){\makebox(0,0)[lb]{\smash{{\SetFigFont{14}{16.8}{\rmdefault}{\mddefault}{\updefault}{\color[rgb]{0,0,0}$a_6$}%
}}}}
\put(7521,-5681){\makebox(0,0)[lb]{\smash{{\SetFigFont{14}{16.8}{\rmdefault}{\mddefault}{\updefault}{\color[rgb]{0,0,0}$a_7$}%
}}}}
\end{picture}%
}
\caption{Obtaining the region $S(2,2,2,3,1,2,4)$ (the shaded region with the bold contour) from the region $T_{7,8}(1,2,5,6,10,13,14,15)$ by removing several vertical forced lozenges; the black triangles indicate the unit triangle removed in the region $T_{7,8}(1,2,5,6,10,13,14,15)$.}\label{semihexmultiple}
\end{figure}

 Our Shuffling Theorem \ref{factorization} implies  the following fern-shuffling theorem.

\begin{thm}[Fern-shuffling Theorem]\label{fernthm} For nonnegative integer $x,y$ and positive integers $d_1,d_2,\dots,d_{k-1}$
\begin{equation}
\frac{\M(R_{x,y}(F_1,\dots,F_k|\ d_1,\dots,d_{k-1}))}{\M(R_{x,y}(F'_1,\dots,F'_k|\ d_1,\dots,d_{k-1}))}=\frac{\M(S^+)\M(S^{-})}{\M(S'^+)\M(S'^-)}\frac{\PP(u,d,y)}{\PP(u',d',y)},
\end{equation}
where $S^+$ and $S^-$ are the generalized dented semihexagons determined by the sequences of dents occurring above and below the horizontal axis of the ferns $F_i$ in $R_{x,y}(F_1,\dots,F_k|\ d_1,\dots,d_{k-1})$, respectively, where $u,d$ are the sum of side-lengths of the up-pointing and down-pointing triangles in the $k$ ferns $F_i$'s, and where $S'^+,S'^-, u', d'$ are defined similarly w.r.t. $R_{x,y}(F'_1,\dots,F'_k|\ d_1,\dots,d_{k-1})$.
\end{thm}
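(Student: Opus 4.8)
The plan is to deduce the Fern-shuffling Theorem (Theorem~\ref{fernthm}) directly from the Shuffling Theorem (Theorem~\ref{factorization}) by expressing both sides of equation~\eqref{genmaineq2} in the special case at hand. Recall that a hexagon with ferns removed $R_{x,y}(F_1,\dots,F_k|\ d_1,\dots,d_{k-1})$ is obtained from a doubly-dented hexagon $H_{x,y}(U;D;B)$ with $U\cap D=\emptyset$ and $B=\emptyset$ by deleting forced vertical lozenges above each up-interval and below each down-interval inside every cluster. Since forced lozenges are present in \emph{every} tiling, removing them sets up a bijection between tilings of the two regions, so $\M(R_{x,y}(F_1,\dots,F_k|\ d_1,\dots,d_{k-1}))=\M(H_{x,y}(U;D;B))$ for the corresponding index sets $U,D$. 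Because the internal shuffling and flipping of each cluster $C_i$ preserves the total length and leaves the union $U\cup D$ and intersection $U\cap D=\emptyset$ unchanged, the hypotheses of Theorem~\ref{factorization} are met, and the ratio of tiling numbers of the two fern regions equals the right-hand side of~\eqref{genmaineq2}.

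The first main step is therefore to rewrite the two Vandermonde-type products $\prod_{1\leq i<j\leq u}\frac{s_j-s_i}{j-i}$ and $\prod_{1\leq i<j\leq d}\frac{t_j-t_i}{j-i}$ appearing in~\eqref{genmaineq2} in terms of the generalized dented semihexagons $S^+$ and $S^-$. Here I would invoke formula~\eqref{CLPeq}: the product $\prod_{1\leq i<j\leq u}\frac{s_j-s_i}{j-i}$ is exactly $\M(T_{u,y}(s_1,\dots,s_u))$, the tiling number of the dented semihexagon whose dents sit at the positions $U$ of the up-pointing removed triangles. The positions of the up-pointing dents along the axis encode, via the fern structure, precisely the generalized dented semihexagon $S^+$ living above the axis; and by the same forced-lozenge argument used in the passage around equation~\eqref{semieq}, $\M(S^+)$ equals $\M(T_{u,y}(\dots))$ up to the bijective removal of forced lozenges. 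Thus $\prod_{1\leq i<j\leq u}\frac{s_j-s_i}{j-i}=\M(S^+)$, and symmetrically $\prod_{1\leq i<j\leq d}\frac{t_j-t_i}{j-i}=\M(S^-)$, with the primed versions giving $\M(S'^+)$ and $\M(S'^-)$.

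Substituting these identifications into~\eqref{genmaineq2} yields exactly
\[
\frac{\M(R_{x,y}(F_1,\dots,F_k|\ d_1,\dots,d_{k-1}))}{\M(R_{x,y}(F'_1,\dots,F'_k|\ d_1,\dots,d_{k-1}))}
=\frac{\M(S^+)\M(S^-)}{\M(S'^+)\M(S'^-)}\cdot\frac{\PP(u,d,y)}{\PP(u',d',y)},
\]
which is the asserted formula, since the $\PP$-factors carry over verbatim from~\eqref{genmaineq2}. The verification reduces to three routine but careful bookkeeping checks: that the side-lengths of the hexagon $x+d,y+u,y+d,x+u,y+d,y+u$ match those dictated by the doubly-dented hexagon once $u,d$ are read as sums of triangle side-lengths; that the dent-position sets $U$ and $D$ recovered from the ferns are indeed the up- and down-index sets of $H_{x,y}(U;D;B)$; and that the forced-lozenge removals are genuinely forced and hence bijective in both the hexagon-to-fern and the semihexagon-to-dented-semihexagon passages.

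I expect the main obstacle to be not the algebra but the careful translation of geometric data into index sets: one must track precisely how the side-lengths $a_i$ of the triangles in each fern convert into the integer gaps recorded by $U$ and $D$, and confirm that the generalized dented semihexagons $S^+,S^-$ built from the \emph{above-axis} and \emph{below-axis} dent sequences coincide with the dented semihexagons whose tiling numbers~\eqref{CLPeq} supplies. Verifying that the hexagon dimensions are consistent under the identification $u=o_a$-type and $d=e_a$-type sums, and that the two independent forced-lozenge bijections (one for the full region, one for each semihexagon) do not interfere, is where the bulk of the care lies; once these identifications are pinned down, the result follows by direct substitution into Theorem~\ref{factorization}.
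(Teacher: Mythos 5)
Your proposal is correct and follows essentially the same route the paper intends: the paper states Theorem \ref{fernthm} as an immediate consequence of Theorem \ref{factorization}, obtained exactly as you describe by identifying the fern region with a doubly-dented hexagon via forced-lozenge removal and recognizing the two Vandermonde-type products in \eqref{genmaineq2} as $\M(S^{+})$ and $\M(S^{-})$ through Cohn--Larsen--Propp's formula \eqref{CLPeq} and the passage from $T_{a,b}$ to the generalized dented semihexagon around \eqref{semieq}. The only slip is notational: the relevant trapezoid is $T_{u,\,x+y+n-u}(s_1,\dots,s_u)$ rather than $T_{u,y}(\dots)$, though this is harmless since the right-hand side of \eqref{CLPeq} is independent of the second parameter.
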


Next, we consider two fern-versions of our Shuffling Theorem for centrally symmetric tilings (i.e. Theorem \ref{centralfactor}) as follows.

We first consider the case when $x+y$ is even. Now assume that $F_1,\dots,F_k$ are $k$ ferns, in which $F_1$ and $F_k$ may be empty and $F_1,F_k$ are touching the west vertex and the symmetry center of the region. Denote by $\overline{F}_i$  is the fern obtained from $F_i$ by reflecting over the symmetry center.  Denote by $E_{x,y}(F_1,\dots,F_k|\ d_1,\dots,d_{k-1})$ the symmetric hexagon with $2k$ collinear ferns removed
\[R_{x,y}(F_1,\dots,F_k,\overline{F}_k,\dots,\overline{F}_1|\ d_1,\dots,d_{k-1},d_{k-1},\dots,d_1).\]
We note that in this case the symmetric center of the region is a lattice point. See Figure \ref{figsymfern2}(a) for an example.
It is easy to see that $x$ and $y$ have to be in fact even in order for the region has centrally symmetric tilings. Our Theorem \ref{centralfactor} implies that:

\begin{thm}\label{symmetricfernthm} Assume that $x,y$ are even nonnegative integers. Then
\begin{align}
\frac{\M_c(E_{x,y}(F_1,\dots,F_k|\ d_1,\dots,d_{k-1}))}{\M_c(E_{x,y}(F'_1,\dots,F'_k|\  d_1,\dots,d_{k-1}))}&=\sqrt{\frac{\M(E_{x,y}(F_1,\dots,F_k|\ d_1,\dots,d_{k-1}))}{\M(E_{x,y}(F'_1,\dots,F'_k|\ d_1,\dots,d_{k-1}))}}\notag\\
&=\frac{\M(S^+)}{\M(S'^+)}
\end{align}
where $S^+$ is the generalized dented semihexagon determined by the sequences of dents occurring above the axis of the ferns  in $E_{x,y}(F_1,\dots,F_k| d_1,\dots,d_{k-1})$ and where $S'^+$ is defined similarly w.r.t. $E_{x,y}(F'_1,\dots,F'_k| d_1,\dots,d_{k-1})$.
\end{thm}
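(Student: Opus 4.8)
The plan is to recognize $E_{x,y}(F_1,\dots,F_k\mid d_1,\dots,d_{k-1})$ as a special instance of a $CS$-type region and then feed it into Theorem \ref{centralfactor} and the Fern-shuffling Theorem \ref{fernthm}. Since $x+y$ is even the symmetry center is a lattice point, and reading off the positions of the up- and down-pointing unit triangles of the left-half ferns $F_1,\dots,F_k$ yields index sets $U$ (up-triangles) and $D$ (down-triangles) inside $\{1,\dots,\lceil(x+y+n)/2\rceil\}$ with $U\cap D=\emptyset$, for which, after removing the forced vertical lozenges that convert clusters into ferns, $E_{x,y}(F_1,\dots,F_k\mid\cdots)$ coincides with $CS_{x,y}(U;D;\emptyset)$. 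The forced lozenges occur in centrally symmetric pairs (one above each up-interval, its $180^\circ$ image below the corresponding down-interval), so their removal is a bijection on \emph{all} tilings and, simultaneously, on \emph{centrally symmetric} tilings; hence $\M(E_{x,y}(F))=\M(CS_{x,y}(U;D;\emptyset))$ and $\M_c(E_{x,y}(F))=\M_c(CS_{x,y}(U;D;\emptyset))$, and likewise for the shuffled and flipped ferns $F'_i$ with index sets $U',D'$ satisfying $U\cup D=U'\cup D'$, $U\cap D=U'\cap D'$.

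With this identification the first equality is immediate: applying Theorem \ref{centralfactor} with $B=\emptyset$ gives $\M_c(E_{x,y}(F))/\M_c(E_{x,y}(F'))=\sqrt{\M(E_{x,y}(F))/\M(E_{x,y}(F'))}$. To evaluate the radicand I would invoke Theorem \ref{fernthm}, now viewing $E_{x,y}(F_1,\dots,F_k\mid\cdots)$ as $R_{x,y}(F_1,\dots,F_k,\overline F_k,\dots,\overline F_1\mid\cdots)$ with $2k$ ferns, whence this ratio equals $\frac{\M(S^+)\M(S^-)}{\M(S'^+)\M(S'^-)}\cdot\frac{\PP(u_E,d_E,y)}{\PP(u'_E,d'_E,y)}$, with $S^{\pm}$ the generalized dented semihexagons cut out above and below the axis. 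Two simplifications collapse this to a perfect square. First, by central symmetry the below-axis dent pattern is exactly the $180^\circ$ image of the above-axis one, so $S^-$ is congruent to $S^+$ and $\M(S^-)=\M(S^+)$ (respectively $\M(S'^-)=\M(S'^+)$). Second, the symmetric doubling forces $u_E=d_E=u+d$ and, since internal shuffling preserves total fern length, $u'_E=d'_E=u'+d'=u+d$, so $\PP(u_E,d_E,y)=\PP(u+d,u+d,y)=\PP(u'_E,d'_E,y)$ and the $\PP$-factors cancel. Therefore
\[
\frac{\M(E_{x,y}(F))}{\M(E_{x,y}(F'))}=\left(\frac{\M(S^+)}{\M(S'^+)}\right)^{\!2},
\]
and taking the square root yields $\M_c(E_{x,y}(F))/\M_c(E_{x,y}(F'))=\M(S^+)/\M(S'^+)$. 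As a cross-check, one can instead start from the explicit right-hand side $\Delta(U\cup((x+y+2n+1)-D))/\Delta(U'\cup((x+y+2n+1)-D'))$ of Theorem \ref{centralfactor} and identify it with $\M(S^+)/\M(S'^+)$ through Cohn--Larsen--Propp's formula \eqref{semieq}: each $\M(S^+)$ equals $\Delta(\text{up-positions})/\Hf(u+d)$, and the common hyperfactorial normalization $\Hf(u+d)$ cancels precisely because both regions carry $u+d$ up-triangles.

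The step I expect to be the main obstacle, and the one deserving the most care, is the congruence $\M(S^-)=\M(S^+)$. One must verify that the $180^\circ$ rotation about the (lattice-point) center of $E_{x,y}$ carries the upper dented semihexagon onto the lower one as genuine regions of the triangular lattice: it sends up-pointing holes above the axis to down-pointing holes below the axis at the reflected positions, and it matches the four trapezoidal sides of $S^+$ with those of $S^-$ exactly because $x$ and $y$ are even. A secondary point is to confirm that the forced-lozenge reduction commutes with the central involution, so that it descends to a genuine bijection on centrally symmetric tilings and the equalities $\M_c(E)=\M_c(CS)$ are legitimate; this is also where the evenness of $x,y$, which guarantees $\M_c\neq 0$ and a lattice-point center, is implicitly used. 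Everything else reduces to substitution into the already-established Theorems \ref{centralfactor} and \ref{fernthm} together with the parity and length bookkeeping above.
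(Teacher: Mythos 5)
Your proposal is correct and follows essentially the route the paper intends: the paper offers no separate proof of this theorem, simply asserting that it follows from Theorem \ref{centralfactor} once $E_{x,y}(F_1,\dots,F_k\mid\cdots)$ is recognized (after the centrally symmetric forced-lozenge reduction) as a region $CS_{x,y}(U;D;\emptyset)$, with the $\Delta$-ratio converted to $\M(S^+)/\M(S'^+)$ via Cohn--Larsen--Propp exactly as in your ``cross-check.'' Your primary route through Theorem \ref{fernthm} (computing the radicand and showing it is a perfect square using $S^-\cong S^+$ and the cancellation of the $\PP$-factors) is a slightly more roundabout but equally valid packaging of the same ingredients.
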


We now assume $x+y$ is odd. We define a variation $E'_{x,y}(F_1,\dots,F_k|\ d_1,\dots,d_{k-1})$ of the region $E_{x,y}(F_1,\dots,F_k|\ d_1,\dots,d_{k-1})$ as follows.  $E'_{x,y}(F_1,\dots,F_k|\ d_1,\dots,d_{k-1})$ is the symmetric hexagon with $2k$ collinear ferns removed
\[R_{x,y}(F_1,\dots,F_k,\overline{F}_k,\dots,\overline{F}_1|\ d_1,\dots,d_{k-1},1,d_{k-1},\dots,d_1),\]
 such that the ferns $F_1$ and $F_k$ are touching the west vertex and the lattice point $1/2$ unit to the left of the symmetry center of the region (the symmetry center is now a middle of a unit horizontal lattice interval); $F_1,F_k$ may be empty. See Figure \ref{symfern3}(a) for an example.

  Our Theorem \ref{centralfactor} also  implies that:

\begin{thm}\label{symmetricfernthm2} For  nonnegative integers $x,y$ of opposite parities
\begin{align}
\frac{\M_c(E'_{x,y}(F_1,\dots,F_k| d_1,\dots,d_{k-1}))}{\M_c(E'_{x,y}(F'_1,\dots,F'_k| d_1,\dots,d_{k-1}))}&=\sqrt{\frac{\M(E'_{x,y}(F_1,\dots,F_k; d_1,\dots,d_{k-1}))}{\M(E'_{x,y}(F'_1,\dots,F'_k; d_1,\dots,d_{k-1}))}}\notag\\
&=\frac{\M(S^+)}{\M(S'^+)}
\end{align}
where $S^+$ is the generalized dented semihexagon determined by the sequences of dents occurring above the axis of the ferns  in $E'_{x,y}(F_1,\dots,F_k| d_1,\dots,d_{k-1})$ and where $S'^+$ is defined similarly w.r.t. $E'_{x,y}(F'_1,\dots,F'_k| d_1,\dots,d_{k-1})$.
\end{thm}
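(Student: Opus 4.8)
The plan is to reduce Theorem~\ref{symmetricfernthm2} to the centrally symmetric Shuffling Theorem~\ref{centralfactor}, following the same strategy as the even case (Theorem~\ref{symmetricfernthm}) but with the bookkeeping adapted to a half--integer symmetry center. First I would realize $E'_{x,y}(F_1,\dots,F_k|\ d_1,\dots,d_{k-1})$ as a centrally symmetric doubly--dented hexagon $CS_{x,y}(U;D;\emptyset)$ after the removal of forced lozenges. Concretely, reversing the fern-to-cluster construction described before Theorem~\ref{fernthm}, each up-interval and down-interval of the left--half ferns $F_1,\dots,F_k$ expands into a block of contiguous unit triangles, and recording their positions produces disjoint ordered index sets $U,D$. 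Because the configuration $F_1,\dots,F_k,\overline{F}_k,\dots,\overline{F}_1$ is centrally symmetric and the central gap has length $1$, the up- and down-pointing triangles of the right--half ferns sit exactly at the reflected positions $(x+y+2n+1)-D$ and $(x+y+2n+1)-U$, and $U,D\subseteq\{1,\dots,\lceil\frac{x+y+n}{2}\rceil\}$ as required. The hypothesis that $x+y$ is odd is precisely what places the center at the midpoint of the central unit interval rather than at a lattice point.

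The point enabling the reduction is that forced--lozenge removal is a bijection on the set of all tilings and, since the fern configuration (hence the set of forced lozenges) occurs in centrally symmetric pairs, it restricts to a bijection on the set of centrally symmetric tilings. Consequently $\M(E'_{x,y})=\M(CS_{x,y}(U;D;\emptyset))$ and $\M_c(E'_{x,y})=\M_c(CS_{x,y}(U;D;\emptyset))$, with the analogous identities for the shuffled region built from $U',D'$. Since the internal shuffling and flipping of the ferns $F_i\mapsto F'_i$ preserves the total up- and down-lengths, it corresponds to passing from $(U,D)$ to $(U',D')$ with $U\cup D=U'\cup D'$ and $U\cap D=U'\cap D'=\emptyset$, so all hypotheses of Theorem~\ref{centralfactor} are met. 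Applying that theorem yields at once both the square-root identity $\M_c=\sqrt{\M}$ and the evaluation of the ratio as $\Delta(U\cup((x+y+2n+1)-D))/\Delta(U'\cup((x+y+2n+1)-D'))$.

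It then remains to identify this Vandermonde ratio with $\M(S^+)/\M(S'^+)$. By construction, the up-pointing triangles above the axis of $E'_{x,y}$—equivalently, the dents of $S^+$—occupy exactly the positions $U\cup((x+y+2n+1)-D)$, i.e. the up-triangle index set of $CS_{x,y}(U;D;\emptyset)$. Undoing the forced lozenges in $S^+$ recovers an ordinary dented semihexagon, so Cohn--Larsen--Propp's formula $(\ref{CLPeq})$ gives $\M(S^+)=\Delta(U\cup((x+y+2n+1)-D))\big/\prod_{1\le i<j\le u+d}(j-i)$. The normalizing product depends only on the number $u+d$ of dents, and disjointness forces $u+d=|U\cup D|=|U'\cup D'|=u'+d'$, so this constant cancels in the ratio and delivers $\M(S^+)/\M(S'^+)=\Delta(U\cup((x+y+2n+1)-D))/\Delta(U'\cup((x+y+2n+1)-D'))$. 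Combining this with the previous paragraph completes the proof.

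The main obstacle is combinatorial rather than analytic: carefully verifying the index dictionary for the half-integer center, namely that the length-$1$ central gap of $E'_{x,y}$ together with the odd parity of $x+y$ is encoded exactly by the reflection $w\mapsto(x+y+2n+1)-w$ and the membership $U,D\subseteq\{1,\dots,\lceil\frac{x+y+n}{2}\rceil\}$, and confirming that the region indeed admits centrally symmetric tilings in this parity. Once this correspondence is set up correctly, the argument is a direct specialization of Theorem~\ref{centralfactor}, formally identical to the even case.
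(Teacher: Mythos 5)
Your proposal is correct and follows essentially the same route the paper intends: the paper states Theorem~\ref{symmetricfernthm2} as a direct consequence of Theorem~\ref{centralfactor}, obtained by realizing $E'_{x,y}(F_1,\dots,F_k|\,d_1,\dots,d_{k-1})$ as a region $CS_{x,y}(U;D;\emptyset)$ up to forced lozenges (which pair up under the central symmetry) and converting the Vandermonde ratio into $\M(S^+)/\M(S'^+)$ via Cohn--Larsen--Propp. Your observation that the normalizing constant $\prod_{1\le i<j\le u+d}(j-i)$ cancels because shuffling preserves $|U\cup D|$ is exactly the needed bookkeeping, so nothing further is required.
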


The author in \cite[Conjecture 4.1.]{Threefern} conjectured that the number of centrally symmetric tilings of a hexagon with three ferns removed is given by a simple product formula. By Theorem \ref{symmetricfernthm} and Ciucu's enumerations of the centrally symmetric tilings of the regions $B_{x,y,z,k}$ and $B'_{x,y,z,k}$ in \cite[Theorems 4 and 5]{Ciu3},  we have the following explicit formula for the latter number of symmetric tilings, which in turn confirms the author's conjecture.

\begin{figure}\centering
\includegraphics[width=15cm]{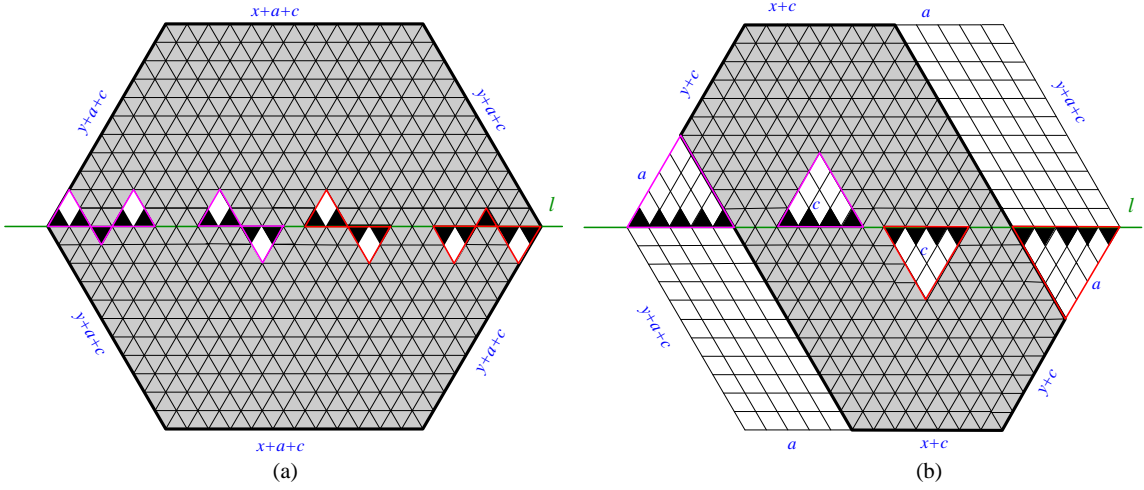}
\caption{Illustrating the proof of Theorem \ref{symmetricthreefern}. Here $a=a_1+a_2+a_3+\cdots$ and $c=c_1+c_2+c_3+\cdots$.}\label{figsymfern2}
\end{figure}

\begin{thm}\label{symmetricthreefern} For even nonnegative integers $x,y$ and two ferns $F_1$ and $F_2$ consisting of triangles of side-lengths $a_1,a_2,\dots,a_{m}$ and $c_1,c_2,\dots,c_k$, from left to right, respectively. Assume in addition that $m$ is odd (and the case of even $m$ can be obtained from the odd case by appending a triangle of side-length $0$ to the end of the $a$-fern). Then
\begin{align}\label{symfern1}
\M_c\left(E_{x,y}\left(F_1,F_2| \frac{x+y}{2}\right)\right)&=\frac{s\left(a_1,\dots,a_{m},\frac{x+y}{2},c_1,\dots,c_k,c_k,\dots, c_{1}+\frac{x+y}{2}+a_{m},a_{m-1},\dots,a_1\right)}{\PP\left(a,\frac{x+y}{2},c\right)}\notag\\
&\times \M_c(B_{x+c,y+2a+c,y+c,c})\notag\\
&=\frac{s\left(a_1,\dots,a_{m},\frac{x+y}{2},c_1,\dots,c_{2k}+\frac{x+y}{2}+a_{m}, a_{m-1},\dots,a_1\right)}{\PP\left(a,\frac{x+y}{2},c\right)} \notag\\
&\times\PP \left(\frac{y+2a+2c}{2},\frac{y}{2},c\right)\prod_{i=1}^{\frac{y}{2}+a}\frac{(\frac{x}{2}+i)_{c}}{(i)_{c}}\prod_{i=1}^{\frac{y}{2}}\frac{(\frac{x+2c}{2}+i)_{\frac{y}{2}+a}}{(c+i)_{\frac{y}{2}+a}}\frac{(\frac{x+2c}{2}+i)_{\frac{y+2a+2c}{2}}}{(c+i)_{\frac{y+2a+2c}{2}}},
\end{align}
where $a=a_1+a_2+a_3+\cdots$ and $c=c_1+c_2+c_3+\cdots$.
\end{thm}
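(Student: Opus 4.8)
The plan is to derive Theorem~\ref{symmetricthreefern} as a direct consequence of the fern-version shuffling result Theorem~\ref{symmetricfernthm}, applied to the specialization $k=2$ and $d_1=\frac{x+y}{2}$. First I would fix a convenient \emph{reference region} against which to shuffle. The natural choice is the region in which one of the two ferns, say $F_1$, has been collapsed into a single block of triangles of one orientation, or more precisely a region whose combinatorics is already known from Ciucu's work on the shamrock. Concretely, I would shuffle and flip the triangles internally within each cluster (as described in the paragraph preceding Theorem~\ref{symmetricfernthm}) so that the new ferns $F'_1,F'_2$ coalesce the up- and down-intervals into a configuration realizing one of Ciucu's regions $B_{x',y',z',k'}$ or $B'_{x',y',z',k'}$. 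Since Theorem~\ref{symmetricfernthm} holds for \emph{any} pair of ferns of the same total length, this is legitimate, and it reduces the problem to multiplying the known count $\M_c$ of the reference region by the explicit shuffling factor $\M(S^+)/\M(S'^+)$.

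The second step is to identify the two pieces on the right-hand side of the claimed formula with the two factors produced by Theorem~\ref{symmetricfernthm}. For the shuffling factor, I would note that $S^+$ is the generalized dented semihexagon read off from the dents above the axis, and by construction the sequence of dents above the axis for $E_{x,y}(F_1,F_2|\frac{x+y}{2})$ is exactly the concatenation recorded by the argument list $(a_1,\dots,a_m,\frac{x+y}{2},c_1,\dots,c_k,c_k,\dots,c_1+\frac{x+y}{2}+a_m,a_{m-1},\dots,a_1)$ appearing inside $s(\cdot)$ in the first displayed equality. Here the central gap of width $\frac{x+y}{2}$ and the reflected copies of the $a$- and $c$-ferns arise from the central-symmetry doubling built into $E_{x,y}$; the merging of the two $c_1$ entries with the gap and with $a_m$ across the center is precisely the forced-lozenge coalescence that occurs when the symmetry center is a lattice point. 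The division by $\PP(a,\frac{x+y}{2},c)$ then accounts for the normalization in passing from the raw shuffling factor $\M(S^+)/\M(S'^+)$ of Theorem~\ref{symmetricfernthm} to the closed tiling count $s(\cdots)$ via equation~\eqref{semieq}, once $\M(S'^+)$ for the reference region is absorbed into Ciucu's $\M_c(B_{x+c,y+2a+c,y+c,c})$.

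The third step is purely computational: substitute Ciucu's explicit product formula for $\M_c(B_{x+c,y+2a+c,y+c,c})$ from \cite[Theorems 4 and 5]{Ciu3} and simplify, obtaining the second equality of~\eqref{symfern1}. This amounts to expanding $\M_c(B_{\cdot})$ as a ratio of hyperfactorials (or equivalently $\PP$-values and Pochhammer products), matching the three triple-product factors $\PP\!\left(\frac{y+2a+2c}{2},\frac{y}{2},c\right)$, $\prod_{i=1}^{\frac{y}{2}+a}\frac{(\frac{x}{2}+i)_c}{(i)_c}$, and the double Pochhammer product over $i=1,\dots,\frac{y}{2}$, and confirming that the $s(\cdots)$ factor rewrites with the collapsed central argument $c_{2k}+\frac{x+y}{2}+a_m$. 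I would verify this matching by checking that the hyperfactorial arguments on both sides agree as multisets, which is the kind of bookkeeping that \eqref{semieq} makes mechanical.

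The main obstacle I anticipate is the careful tracking of the central-coalescence bookkeeping: when $x+y$ is even the symmetry center is a lattice point, so the innermost triangles of $F_k=F_2$ and $\overline{F}_2$ together with the central gap of length $\frac{x+y}{2}$ merge into a single effective dent (the entry $c_1+\frac{x+y}{2}+a_m$, and in the condensed form $c_{2k}+\frac{x+y}{2}+a_m$), and getting the indices, orientations, and the total-length constraint $u=a$, $d=c$ exactly right is delicate. A secondary difficulty is justifying the square-root identity relating $\M_c$ to $\sqrt{\M}$ in the intermediate step; but this is already supplied by Theorem~\ref{centralfactor} (inherited through Theorem~\ref{symmetricfernthm}), so I would invoke it directly rather than reprove it, and concentrate the effort on the dent-sequence identification and the final hyperfactorial simplification.
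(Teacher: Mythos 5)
Your proposal follows essentially the same route as the paper: shuffle $F_1,F_2$ via Theorem \ref{symmetricfernthm} to reference ferns $F'_1,F'_2$ consisting of single up-pointing triangles of side-lengths $a$ and $c$, identify $\M(S^+)$ with the $s$-function in the numerator and $\M(S'^+)$ with $\PP\left(a,\frac{x+y}{2},c\right)$, recognize the reference region (after removing forced lozenges) as Ciucu's $B_{x+c,y+2a+c,y+c,c}$, and substitute his product formula for the second equality. One small correction to your bookkeeping: the merged entry $c_1+\frac{x+y}{2}+a_m$ sits at an even position of the $s$-argument list and is therefore a gap between dents (arising from the reflected copies of $c_1$ and $a_m$ flanking the second inter-fern gap of the doubled region), not a "single effective dent" at the center — but this does not affect the validity of your approach.
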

We also note that the region $E_{x,y}(F_1,F_2|\frac{x+y}{2})$ is exactly the centrally symmetric hexagon with three ferns removed in Conjecture 4.1 in \cite{Threefern}. It means that Theorem \ref{symmetricthreefern} implies Conjecture 4.1 in \cite{Threefern}.

\begin{proof}
Assume that the fern $F_1$ consists of $m$ triangles of side-lengths $a_1,a_2,\dots,a_m$ from left to right, starting by an up-pointing triangles, the fern $F_2$ consists of $k$ triangles of side-lengths $c_1,c_2,\dots,c_k$ from left to right, starting by an up-pointing triangles. We pick $F'_1$ and $F'_2$ the ferns consisting of a single up-pointing triangle of side-lengths $a$ and $c$, respectively. Applying Theorem \ref{symmetricfernthm} to the region
\[E_{x,y}\left(F_1,F_2\ |\  \frac{x+y}{2}\right)=R_{x,y}\left(F_1,F_2\cup \overline{F}_2, \overline{F_1}\ |\  \frac{x+y}{2},\frac{x+y}{2}\right)\]
and the region
\[E_{x,y}(F'_1,F'_2\ |\  \frac{x+y}{2})=R_{x,y}\left(F'_1,F'_2\cup\overline{F'_2}, \overline{F'_1}\ |\  \frac{x+y}{2},\frac{x+y}{2}\right),\]
 we have the number of the centrally symmetric tilings of the first region is given by the number of centrally symmetric tilings of the second region, times a simple factor.

We note that the numbers of tilings of the dented semihexagons $S^+$ and $S'^+$ in Theorem \ref{symmetricfernthm} are given respectively by the $s$-functions in the numerator and denominator of the fraction after the first equality in (\ref{symfern1}). Moreover, after removing forced lozenges as in Figure \ref{figsymfern2}, the region $E_{x,y}(F'_1,F'_2| \frac{x+y}{2})$  becomes the region $B_{x+c,y+2a+c,y+c,c}$ in  \cite[Theorem 4]{Ciu3}. Then the first equality in (\ref{symfern1}) follows.  The second equality (\ref{symfern1}) follows from Ciucu's enumeration of centrally symmetric tilings of the region $B_{x,y,z,k}$ given in \cite[Theorem 4]{Ciu3}.
\end{proof}

\begin{figure}\centering
\includegraphics[width=15cm]{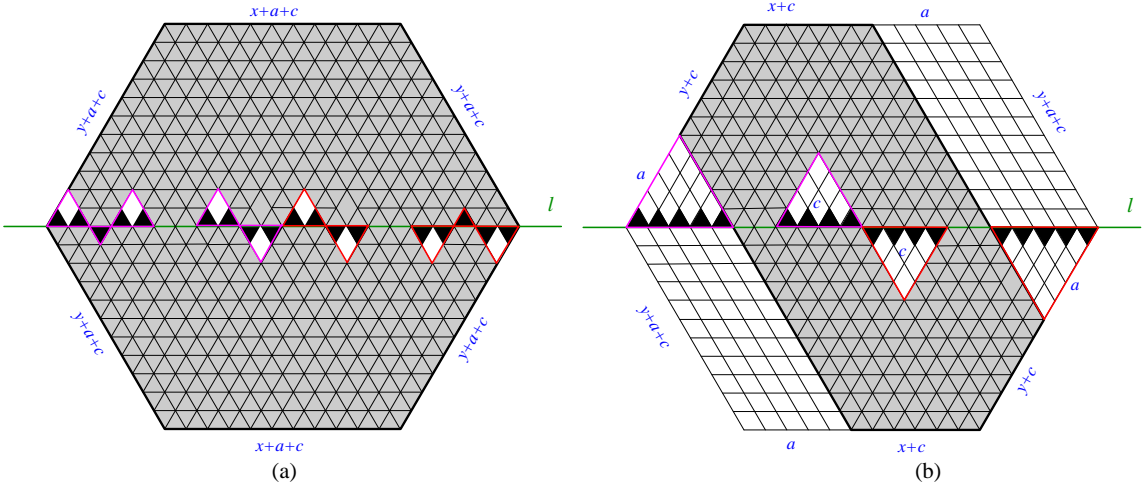}
\caption{Illustrating the proof of Theorem \ref{symmetricthreefern2}. Here $a=a_1+a_2+a_3+\cdots$ and $c=c_1+c_2+c_3+\cdots$.}\label{figsymfern3}
\end{figure}

\begin{thm}\label{symmetricthreefern2} For nonnegative integers $x,y$ and two ferns $F_1$ and $F_2$ consisting of triangles of side-lengths $a_1,a_2,\dots,a_{m}$ and $c_1,c_2,\dots,c_k$, from left to right, respectively. Assume in addition that $m$ is odd and $k$ is even (and the other cases can be obtained from the this case by appending a triangle of side-length $0$ to the end of the corresponding fern).

(a) If $x$ is odd and $y$ is even, then
\begin{align} \label{symfern2}
\M_c\left(E'_{x,y}\left(F_1, F_2\ |\  \frac{x+y-1}{2}\right)\right)&=\frac{s\left(a_1,\dots,a_{m},\frac{x+y-1}{2},c_1,\dots,c_{k}+1,c_{k},\dots,c_1+\frac{x+y-1}{2}+a_{m},a_{m-1},\dots,a_1\right)}{\PP\left(a,\frac{x+y-1}{2},c\right)}\notag\\&\times\M_c(B'_{y+2a+c,x+c,y+c,c})\notag\\
&=\frac{s\left(a_1,\dots,a_{m},\frac{x+y-1}{2},c_1,\dots,c_{k}+1,c_{k},\dots,c_1+\frac{x+y-1}{2}+a_{m},a_{m-1},\dots,a_1\right)}{\PP\left(a,\frac{x+y-1}{2},c\right)}\notag\\
&\times \PP \left ( \frac{x+2c+1}{2},\frac{y}{2}, c \right) \prod_{i=1}^{\frac{x-1}{2}}\frac{(\frac{y+2a}{2}+i)_{c}}{(i)_{c}}\notag\\
&\times \prod_{i=1}^{\frac{y}{2}}\frac{(\frac{y+2a+2c}{2}+i)_{\frac{x-1}{2}}}{(c+i)_{\frac{x-1}{2}}}\frac{(\frac{y+2a+2c}{2}+i)_{\frac{x+2c+1}{2}}}{(c+i)_{\frac{x+2c+1}{2}}}.
\end{align}
(b) If $x$ is even and $y$ is odd, then
\begin{align}\label{symfern3}
\M_c(E'_{x,y}(F_1, F_2\ |\  \frac{x+y-1}{2}))&=\frac{s(a_1,\dots,a_{m},\frac{x+y-1}{2},c_1,\dots,c_{k}+1,c_{k},\dots,c_1+\frac{x+y-1}{2}+a_{m},a_{m-1},\dots,a_1)}{\PP(a,\frac{x+y-1}{2},c)}\notag\\
&\times\M_c(B'_{y+2a+c,x+c,y+c,c})\notag\\
&=\frac{s(a_1,\dots,a_{m},\frac{x+y-1}{2},c_1,\dots,c_{k}+1,c_{k},\dots,c_1+\frac{x+y-1}{2}+a_{m},a_{m-1},\dots,a_1)}{\PP(a,\frac{x+y-1}{2},c)}\notag\\
 &\times \PP \left ( \frac{x+2c}{2},\frac{y-1}{2}, c +1 \right) \prod_{i=0}^{\frac{y-1}{2}}\frac{(\frac{y+2a+1}{2}+i)_{c}}{(c+i+1)_{c}}\notag\\
&\times \prod_{i=1}^{\frac{x}{2}}\frac{(\frac{y+2a-1}{2}+i)_{\frac{y+2c+1}{2}}}{(i)_{\frac{y+2c+1}{2}}}\frac{(\frac{y+2a+2c+1}{2}+c+i)_{\frac{y-1}{2}}}{(2c+i+1)_{\frac{y+2c+1}{2}}}.
\end{align}
\end{thm}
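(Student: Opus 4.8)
The plan is to follow the same strategy as in the proof of Theorem \ref{symmetricthreefern}, replacing each even-case ingredient by its odd-case analogue. I would start from the two ferns $F_1$ with triangles $a_1,\dots,a_m$ ($m$ odd) and $F_2$ with triangles $c_1,\dots,c_k$ ($k$ even), and collapse each fern to a single up-pointing triangle: let $F'_1$ be the single up-triangle of side-length $a=a_1+a_2+\cdots$ and $F'_2$ the single up-triangle of side-length $c=c_1+c_2+\cdots$. Since $F'_i$ and $F_i$ have equal total length, Theorem \ref{symmetricfernthm2} applies to the pair of regions $E'_{x,y}(F_1,F_2\mid \frac{x+y-1}{2})$ and $E'_{x,y}(F'_1,F'_2\mid \frac{x+y-1}{2})$ and gives
\[
\frac{\M_c(E'_{x,y}(F_1,F_2\mid \tfrac{x+y-1}{2}))}{\M_c(E'_{x,y}(F'_1,F'_2\mid \tfrac{x+y-1}{2}))}=\frac{\M(S^+)}{\M(S'^+)},
\]
where $S^+,S'^+$ are the generalized dented semihexagons recording the up-pointing dents of the two regions.

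Next I would read off the two semihexagons. Scanning the up-pointing triangles of $E'_{x,y}(F_1,F_2\mid\frac{x+y-1}{2})=R_{x,y}(F_1,F_2,\overline F_2,\overline F_1\mid \frac{x+y-1}{2},1,\frac{x+y-1}{2})$ from left to right — recording each up-triangle as a dent and each down-triangle, each exterior gap, and the central unit gap as spacings — produces exactly the argument of the $s$-function in \eqref{symfern2}. In particular, the central unit gap of the $E'$-region is precisely what accounts for the merged entry $c_k+1$, while the two exterior gaps $\frac{x+y-1}{2}$ together with the edge down-triangles produce the merged entry $c_1+\frac{x+y-1}{2}+a_m$. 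For the collapsed region only $a$ and $c$ survive as up-dents (the reflected ferns $\overline{F'_1},\overline{F'_2}$ are down-triangles and contribute nothing to $S^+$), so $S'^+$ is the two-dent semihexagon, and by \eqref{semieq} together with the identity $s(a_1,a_2,a_3)=\PP(a_1,a_2,a_3)$ and the full symmetry of $\PP$ in its arguments, $\M(S'^+)=\PP(a,\frac{x+y-1}{2},c)$. This produces the first equality in each of \eqref{symfern2} and \eqref{symfern3}, once the denominator region is evaluated.

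The remaining step is to evaluate $\M_c(E'_{x,y}(F'_1,F'_2\mid\frac{x+y-1}{2}))$. Removing the forced lozenges along the collapsed ferns, as illustrated in Figure \ref{figsymfern3}, turns this region into Ciucu's centrally symmetric region $B'_{y+2a+c,\,x+c,\,y+c,\,c}$, so that $\M_c(E'_{x,y}(F'_1,F'_2\mid\frac{x+y-1}{2}))=\M_c(B'_{y+2a+c,x+c,y+c,c})$. Substituting Ciucu's explicit product for $\M_c(B'_{x,y,z,k})$ from \cite[Theorem 5]{Ciu3} then yields the second equalities. Since $x+y$ is odd, the two parity patterns $(x\text{ odd},\,y\text{ even})$ and $(x\text{ even},\,y\text{ odd})$ fall into the two branches of Ciucu's theorem and thereby produce the two distinct product formulas \eqref{symfern2} and \eqref{symfern3}; the first two lines of each case are identical, and the split occurs only at the final substitution.

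The main obstacle I expect is the bookkeeping in these two reductions. First, one must verify that the left-to-right scan of the up-triangles in the non-collapsed $E'$-region reproduces precisely the stated $s$-sequence, with the unit central gap correctly absorbed into $c_k+1$ and the boundary contributions correctly merged at both ends. Second, one must check that the forced-lozenge removal identifies the collapsed region with exactly $B'_{y+2a+c,x+c,y+c,c}$ and that the parity of $x$ against $y$ selects the correct branch of \cite[Theorem 5]{Ciu3}. Both are routine but error-prone index computations; the conceptual content is carried entirely by Theorem \ref{symmetricfernthm2} and Ciucu's enumeration, exactly as in the even case.
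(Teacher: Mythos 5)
Your proposal matches the paper's proof: both collapse $F_1,F_2$ to single up-pointing triangles of side-lengths $a$ and $c$, apply Theorem \ref{symmetricfernthm2} to the pair of $E'$-regions, identify $\M(S^+)$ and $\M(S'^+)$ with the $s$-function and $\PP(a,\frac{x+y-1}{2},c)$ respectively, reduce the collapsed region to $B'_{y+2a+c,x+c,y+c,c}$ by forced-lozenge removal as in Figure \ref{figsymfern3}(b), and finish with Ciucu's Theorem 5. The bookkeeping concerns you flag are exactly the steps the paper treats as routine, so nothing essential is missing.
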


\begin{proof} We also pick  the fern $F_1$ consisting of $m$ triangles of side-length $a_1,a_2,\dots,a_m$ from left to right, starting by an up-pointing triangles, the fern $F_2$ consisting of $k$ triangles of side-lengths $c_1,c_2,\dots,c_k$ from left to right, starting by an up-pointing triangles, the ferns $F'_1$ and $F'_2$ consisting of a single up-pointing triangle of side-lengths $a$ and $c$, respectively.

We also apply Theorem \ref{symmetricfernthm2} to the region
\[E'_{x,y}\left(F_1,F_2\ |\  \frac{x+y-1}{2}\right)=R_{x,y}\left(F_1,F_2, \overline{F}_2, \overline{F_1}\ |\  \frac{x+y-1}{2},1,\frac{x+y-1}{2}\right)\]
and the region
 \[E'_{x,y}\left(F'_1,F'_2\ |\  \frac{x+y-1}{2}\right)=R_{x,y}\left(F'_1,F'_2, \overline{F'_2}, \overline{F'_1}\ | \  \frac{x+y-1}{2},1,\frac{x+y-1}{2}\right).\]
We note that, by removing forced lozenges from the region $E'_{x,y}(F'_1,F'_2\ |\ \frac{x+y-1}{2})$ as in Figure \ref{figsymfern3}(b), we get a region congruent with the region $B'_{y+2a+c,x+c,y+c,c}$ in  \cite[Theorem 5]{Ciu3}. This implies the first equalities in (\ref{symfern2}) and (\ref{symfern3}).  The second equality in each part of the above theorem follows from Ciucu's enumeration of centrally symmetric tilings of the region $B'_{x,y,z,k}$ given in  \cite[Theorem 5]{Ciu3}.
\end{proof}

\begin{rmk}We note that our region $E_{x,y}(F_1,F_2|\  \frac{x+y}{2})$ and $E'_{x,y}\left(F_1,F_2\ |\  \frac{x+y-1}{2}\right)$ can be considered as a common generalization of
\begin{enumerate}
\item the centrally symmetric fern-cored hexagon in \cite{Ciu4};
\item the region $B_{x,y,z,k}$ and $B'_{x,y,z,k}$ in \cite{Ciu3};
\item the centrally symmetric hexagon $Hex(x,y,z)$,
\end{enumerate}
up to removal of certain forced lozenges. Indeed, for the case of fern-cored hexagons in \cite{Ciu4}, we pick $F_1$ consisting of a single triangle (and $F_2$ being arbitrary). Then after removing forced lozenges similarly to that in Figures \ref{figsymfern2}(b) and \ref{figsymfern3}(b), we get back a centrally symmetric fern-cored hexagon in \cite{Ciu4}. Next, if assume in addition that $F_2$ consists of a single up-pointing triangle, then we get the region $B_{x,y,z,k}$ or $B_{x,y,z,k}$ as in Figures \ref{figsymfern2}(b) and \ref{figsymfern3}(b). Finally, if  we assume that $F_1$ consists of a single triangle and that $F_2$ is an empty fern, then our region becomes a centrally symmetric hexagon $Hex(x,y,z)$ after removing forced lozenges. This means that our Theorems \ref{symmetricfernthm} and \ref{symmetricfernthm2} can be considered as a common generalization of Ciucu's main results in \cite{Ciu4} and \cite{Ciu4} and Stanley's enumeration of self-complementary plane partitions (equivalently, centrally symmetric tilings of a hexagon) in \cite{Stanley}.
\end{rmk}

\section{Kuo Condensation and other preliminary results}
A \emph{forced lozenge} in a region $R$ is a lozenge that appears in any tilings of $R$. The removal of one or more forced lozenges does not change the number of tilings of the region. 


A \emph{perfect matching} (or simply \emph{matching} in this paper) of a graph is a collection of disjoint edges that covers all vertices of the graph.  A \emph{(planar) dual graph} of a region $R$ on the triangular lattice is the graph whose vertices are unit triangles in $R$ and whose edges connect precisely those two unit triangles of $R$ sharing an edge. The lozenge tilings of a region $R$ are in bijection with the matchings of its dual graph. Under this point of view, we use the notation $\M(G)$ (resp, $\M_c(G)$) for the number of matchings (resp., the number of centrally symmetric matchings) of the graph $G$. 

We will employ the following elegant variations of Kuo condensation introduced by Ciucu in \cite{Ciu3}\footnote{The author also obtained equivalent versions of Theorems \ref{ciukuo1} and \ref{ciukuo2} when working on the initial version of the paper. The proofs of the author are similar, \emph{but longer and more complicated} than that of Ciucu in \cite{Ciu3}.}. We refer the reader to Kuo's paper \cite{Kuo} for the original versions Kuo condensation.
\begin{thm}[Theorem 2 in \cite{Ciu3}]\label{ciukuo1} Let $G$ be a centrally symmetric, planar bipartite graph
embedded in an annulus. Let $\{a_1, a_2\}$, $\{b_1, b_2\}$ and $\{c_1, c_2\}$ be pairs of symmetric vertices on
the outer face of $G$. Assume that $a_1$, $b_1$, $c_1$, $a_2$, $b_2$, $c_2$ appear in this cyclic order around the
outer face, and that they alternate in color. Let $\{d_1, d_2\}$ be a pair of symmetric vertices of $G$
on the central face. Then we have
\begin{equation}\label{kuoeq}
\M_c(G)\M_c(G_{abcd}) = \M_c(G_{ab})\M_c(G_{cd}) +\M_c(G_{ac})\M_c(G_{bd}) +\M_c(G_{ad})\M_c(G_{bc}),
\end{equation}
where $G_{abcd} = G-\{a_1, a_2, b_1, b_2, c_1, c_2, d_1, d_2\}$, $G_{ab} = G-\{a_1, a_2, b_1, b_2\}$, and so on.
\end{thm}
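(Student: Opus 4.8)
The plan is to adapt Kuo's graphical-condensation argument, the superposition-of-matchings method, to the centrally symmetric setting, carrying the $180^\circ$ rotation through every step. Write $\rho$ for the fixed-point-free involution realizing the central symmetry of $G$; since $G$ lives on the triangular lattice, $\rho$ interchanges the two color classes, so the antipodal vertices $a_2=\rho(a_1)$, $b_2=\rho(b_1)$, $c_2=\rho(c_1)$, $d_2=\rho(d_1)$ have colors opposite to $a_1,b_1,c_1,d_1$; together with the alternation hypothesis this makes the eight deleted vertices split into four white and four black. A centrally symmetric matching is by definition $\rho$-invariant, so the first reduction is to reinterpret each of the four products $\M_c(G)\M_c(G_{abcd})$, $\M_c(G_{ab})\M_c(G_{cd})$, $\M_c(G_{ac})\M_c(G_{bd})$, $\M_c(G_{ad})\M_c(G_{bc})$ as counting \emph{pairs} of $\rho$-invariant matchings of $G$ in which two complementary subsets of the eight vertices have been removed.

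First I would superimpose the two matchings in each such pair, coloring the edges of the first matching red and those of the second blue. Because every vertex other than the eight deleted ones is covered exactly once by each matching, the superposition is a disjoint union of doubled edges, red--blue alternating cycles, and red--blue alternating paths whose endpoints are precisely the eight special vertices; and since both matchings are $\rho$-invariant, the whole configuration is $\rho$-invariant. The deleted vertices are the path endpoints, and the \emph{terminal color} at each endpoint (red if the vertex survives in the first matching, blue otherwise) is exactly what distinguishes the four products: in $\M_c(G)\M_c(G_{abcd})$ all eight endpoints are red-terminal, whereas in each right-hand product four are red-terminal and four blue-terminal, the split being dictated by the pairing $\{ab\mid cd\}$, $\{ac\mid bd\}$, or $\{ad\mid bc\}$.

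Next I would set up the color-switching bijection. Doubled edges and alternating cycles transfer verbatim between the two sides, while switching red and blue along a path preserves the set of covered vertices and merely exchanges which pair of deletions the path records; the content of the theorem is that the configurations counted by the left-hand product decompose, according to the connectivity pattern of their four paths, into exactly the three families counted on the right. Planarity does the essential work here: the six outer vertices occur in the cyclic order $a_1,b_1,c_1,a_2,b_2,c_2$ while $d_1,d_2$ sit on the inner (central) face of the annulus, and it is precisely the presence of this central face that makes the third (``crossing'') pairing admissible, so that a non-crossing, $\rho$-invariant system of four white--black paths can link the four antipodal \emph{pairs} $a,b,c,d$ in exactly the three ways $ab\mid cd$, $ac\mid bd$, $ad\mid bc$. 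The $\rho$-equivariance forces the paths to appear either in antipodal orbits of size two or as self-symmetric paths joining a vertex to its antipode through a single $\rho$-fixed edge near the hole; tracking this is what turns the usual two-term Kuo identity into the present three-term symmetric identity and is ultimately responsible for the square-root phenomenon recorded in Theorem~\ref{centralfactor}.

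I expect the main obstacle to be the topological classification in the third step: proving rigorously that, in the annulus with $d_1,d_2$ on the central face and the symmetry constraint imposed, the admissible $\rho$-invariant non-crossing path systems are \emph{exactly} these three types, with neither over- nor under-counting. The delicate cases are the self-symmetric paths forced to carry a $\rho$-fixed central edge and the paths that wind around the hole; I would control them by a Jordan-curve analysis performed on the $\rho$-quotient of the annulus, reducing the enumeration of $\rho$-invariant systems to an ordinary non-crossing matching count on a fundamental domain and then invoking the classical, non-symmetric Kuo condensation of \cite{Kuo} on that domain. Once this classification is in place, the red/blue switching yields the desired bijection and the identity follows.
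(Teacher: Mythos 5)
A preliminary remark: the paper does not prove this statement at all. It is imported verbatim as Theorem~2 of \cite{Ciu3} and used as a black box (a footnote even records that the author had his own, longer proof but defers to Ciucu's). So there is no in-paper argument to compare yours against, and I can only judge your sketch on its own terms.

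Your strategy --- superpose two centrally symmetric matchings and redistribute the eight special vertices by a $\rho$-equivariant colour switch along paths --- is the right family of ideas, but as written the proof has a genuine gap: everything that actually produces the three-term identity is concentrated in the step you explicitly defer, and the repair you propose for that step does not work. Concretely: (i) the switch must be performed along a $\rho$-invariant sub-collection of paths, and the natural candidate (the $\rho$-orbit of the path through $a_1$, say) fails exactly when that path is self-symmetric, i.e.\ joins $a_1$ to $a_2=\rho(a_1)$: switching along it alone toggles only the two terminal colours at $a_1,a_2$ and lands in $\M_c(G-\{a_1,a_2\})\,\M_c(G-\{b_1,b_2,c_1,c_2,d_1,d_2\})$, which is not one of the six configuration classes appearing in the identity. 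One must therefore classify the $\rho$-invariant four-path systems that planarity in the annulus actually permits (for instance, a Jordan-arc separation argument shows that self-symmetric paths for two distinct letters among $a,b,c$ cannot coexist, because their endpoint pairs interleave on the outer circle, whereas a self-symmetric $d$-path, anchored on the central face, can accompany one of them), prescribe in each case a canonical $\rho$-invariant pair of paths to switch, and verify that the resulting map is a bijection onto the disjoint union of the three right-hand classes, including the inverse direction. None of this is in your sketch; asserting that the configurations ``decompose into exactly the three families'' is assuming the theorem. (ii) Your fallback --- pass to the $\rho$-quotient and invoke classical Kuo condensation there --- is not viable as stated: the quotient of the annulus by the free involution is again an annulus, the quotient graph is in general not bipartite because $\rho$ reverses the colour classes, a centrally symmetric matching containing a $\rho$-fixed edge does not descend to a matching of the quotient, and in any case the condensation identities of \cite{Kuo} are statements about matchings of a graph, not tools for classifying path topologies on a fundamental domain. (iii) Non-contractible cycles and paths winding around the hole still need to be shown not to obstruct the switch. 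Until (i) is carried out in full and the switching rule is pinned down and inverted, the identity is described rather than proved.
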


We consider a variation of the above result, in which the vertices
$d_1$ and $d_2$ belong to two different faces, provided they share an edge. This follows directly from Theorem 3 in \cite{Ciu3}, when the two faces  $\mathcal{F}_1$ and $\mathcal{F}_2$ share
an edge.
\begin{thm}\label{ciukuo2} Let $G$ be a centrally symmetric, planar bipartite graph embedded in a
disk, so that $F_1$ and $F_2$ are two adjacent faces that are each other's image through the central
symmetry. Assume that $a_1$, $b_1$, $c_1$, $a_2$, $b_2$, $c_2$ are six vertices chosen as in Theorem \ref{ciukuo1} and that $d_1\in \mathcal{F}_1$ and $d_2\in \mathcal{F}_2$ are images of each other through the central symmetry. Then  we also have
\begin{equation}
\M_c(G)\M_c(G_{abcd}) = \M_c(G_{ab})\M_c(G_{cd}) +\M_c(G_{ac})\M_c(G_{bd}) +\M_c(G_{ad})\M_c(G_{bc}).
\end{equation}
\end{thm}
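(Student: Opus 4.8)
The plan is to recognize Theorem \ref{ciukuo2} as the ``adjacent-face'' specialization of Ciucu's symmetric condensation machinery and to reduce it, as directly as possible, to the annular case already recorded as Theorem \ref{ciukuo1}. The key structural observation is that if two faces $\mathcal{F}_1,\mathcal{F}_2$ are interchanged by the central symmetry $\sigma$ and share an edge $e=pq$, then $\sigma(e)=e$; hence the center of $\sigma$ is the midpoint of $e$, and $\sigma$ swaps the endpoints $p,q$. Thus the union $\mathcal{F}_1\cup e\cup\mathcal{F}_2$ is a single $\sigma$-invariant central region carrying both $d_1$ and $d_2$, and it plays exactly the role of the central face of the annulus in Theorem \ref{ciukuo1}.

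Concretely, I would first pass to $G':=G-e$ (delete the edge $e$ but keep $p,q$). In $G'$ the faces $\mathcal{F}_1$ and $\mathcal{F}_2$ merge into a single face $\mathcal{F}$, so $G'$ is embedded in an annulus with central face $\mathcal{F}$, the six outer vertices $a_1,b_1,c_1,a_2,b_2,c_2$ are unchanged and still alternate in color in the prescribed cyclic order, and $d_1,d_2\in\mathcal{F}$ form a symmetric pair on the central face. Theorem \ref{ciukuo1} therefore applies verbatim to $G'$ and to each of the seven graphs $G'_{ab},G'_{ac},\dots,G'_{abcd}$, giving the desired three-term identity for $G'$. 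To return from $G'$ to $G$ I would use the edge-splitting rule for symmetric matchings: since $\sigma(e)=e$, every centrally symmetric matching either avoids $e$ or uses it, whence $\M_c(H)=\M_c(H-e)+\M_c(H-\{p,q\})$ for each of the eight graphs $H\in\{G,G_{ab},\dots,G_{abcd}\}$ not already missing $p$ or $q$. The $(-e)$-parts reproduce the identity for $G'$ just established, so the remaining point is that the ``uses $e$'' parts, governed by $G-\{p,q\}$ and its vertex-deleted cousins, satisfy the same relation; this is again an instance of symmetric condensation, now on the smaller symmetric graph obtained by deleting the central pair $p,q$.

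Rather than grind through this bookkeeping, the cleanest route --- and the one the paper follows --- is to invoke directly Ciucu's general two-face condensation (Theorem 3 of \cite{Ciu3}), of which the present statement is precisely the case in which $\mathcal{F}_1$ and $\mathcal{F}_2$ are adjacent. Its proof proceeds by superposing a centrally symmetric matching $M$ of $G$ with a centrally symmetric matching $N$ of $G_{abcd}$: the union $M\cup N$ is $\sigma$-invariant and decomposes into doubled edges, cycles, and exactly four alternating paths whose eight endpoints are $a_1,a_2,b_1,b_2,c_1,c_2,d_1,d_2$. Color-alternation forces each path to join two oppositely-colored deleted vertices, planarity forces the paths to be non-crossing, and $\sigma$-invariance forces them to occur in symmetric configurations; classifying these configurations yields exactly the three pairing patterns $\{ab,cd\},\{ac,bd\},\{ad,bc\}$ recorded on the right-hand side, mirroring the classical argument of Kuo \cite{Kuo}.

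I expect the main obstacle to be precisely this topological case analysis: checking that the non-crossing, color-compatible, $\sigma$-symmetric path systems are in weight-preserving bijection with the three product terms, and in particular handling the central path --- or the central symmetric pair of paths --- running through $\mathcal{F}_1\cup e\cup\mathcal{F}_2$. This central object is exactly where the shared-edge hypothesis is used, since it is what guarantees that $d_1$ and $d_2$ behave as a single symmetric pair on one central region rather than as two unrelated interior deletions, and it is the feature that makes the adjacent-face configuration fall within the scope of Theorem \ref{ciukuo1} (equivalently, of Theorem 3 of \cite{Ciu3}).
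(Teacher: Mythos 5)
Your final route is exactly the paper's: the paper gives no independent proof of Theorem \ref{ciukuo2} and simply observes that it follows directly from Theorem 3 of \cite{Ciu3}, specialized to the case where the two symmetric faces $\mathcal{F}_1$ and $\mathcal{F}_2$ share an edge, which is precisely the citation you invoke. (Your preliminary edge-deletion reduction to Theorem \ref{ciukuo1} would need more care --- the identity is quadratic in the matching counts, so splitting each factor via $\M_c(H)=\M_c(H-e)+\M_c(H-\{p,q\})$ produces cross terms that your sketch does not address --- but since you discard that route in favor of the direct appeal to Ciucu's two-face condensation, the proposal matches the paper.)
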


Strictly speaking, as originally stated in \cite{Ciu3}, Theorems \ref{ciukuo1} and \ref{ciukuo2} still holds for the case when $G$ is a weighted graph, i.e. the edges of $G$ carry weights. However, we only consider unweighted graphs in this paper.

\section{Proof of the main theorem (Theorem \ref{centralfactor})}

We would like to show that
\begin{align}\label{maineqrefine}
\M_c(CS_{x,y}(U;D;B))=\frac{\Delta(U\cup \overline{D})}{\Delta(U'\cup \overline{D'})}\cdot \M_c(CS_{x,y}(U';D';B)).
\end{align}
here we  use the shorthand notation $\overline{D}$ for $((x+y+2n+1)-D)$.

 It would be convenient to view the numerator of the fraction on the right hand-side of (\ref{maineqrefine}) is the produce of differences of elements in the index set of removed up-pointing triangles in the region $CS_{x,y}(U;D;B)$, and the denominator is the product of differences of the elements in the index set of removed up-pointing triangles in the region $CS_{x,y}(U';D';B)$.

There are two cases to distinguish, the case when $x,y$ have the same parity and the case when they have opposite parities. We first consider the case when $x,y$ have the same parity. It is not hard to see that the region $CS_{x,y}(U;D;B)$ admits a centrally symmetric tiling only if $x,y$ are both even. Therefore, we assume, without loss of generality, that $x,y$ are both even.

\medskip

We prove (\ref{maineqrefine}) by the induction on $x+y+u+d$. The base cases are the situations when $x<2b+2$, when $y<2$, and when $u+d=0$.

\begin{figure}\centering
\includegraphics[width=10cm]{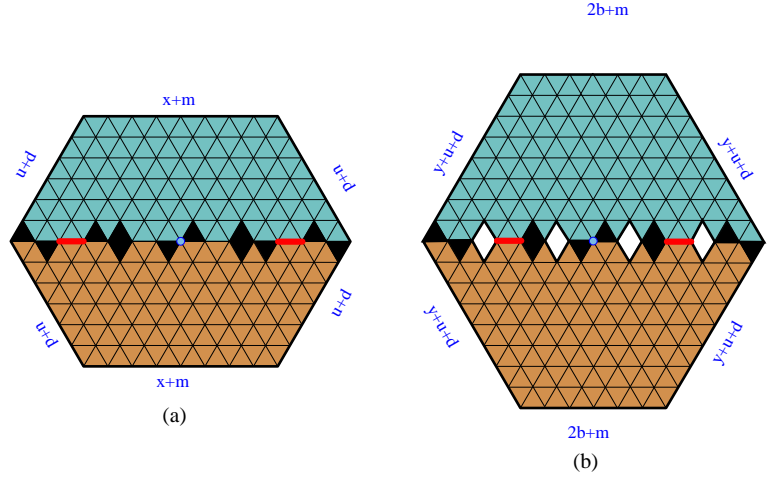}
\caption{The base cases when $x+y$ is even: (a) $y=0$ and (b) $x=2b$.}\label{basecase1}
\end{figure}
As the case $u+d=0$ is trivial, we consider now the case when $y<2$. Since $y$ is even, $y$ must be $0$. Each centrally symmetric tiling of our region can be partitioned into tilings of two congruent dented semihexagons obtained by dividing our region along the horizontal axis. These dented semihexagons that are images of each other through the central symmetry (see Figure \ref{basecase1}(a)). It easy to see that the centrally symmetric tilings of the region are in bijection with tilings of each of the dented semihexagons. This way we have
 \[\M_c(CS_{x,0}(U;D;B))=\M(T_{x+m,u+d}(U \cup \overline{D})).\] Similarly, we have
 \[\M_c(CS_{x,0}(U';D';B))=\M(T_{x+m,u+d}(U' \cup\overline{D'})).\]
  Then (\ref{maineqrefine})  follows directly from Cohn--Larsen--Propp's formula (\ref{CLPeq}).

Next, we consider the case $x<2b+2$. As $x$ is even and $x\geq 2b$, $x$ must be $2b$. In each centrally symmetric tilings of our region, there is a vertical lozenge at each of the positions in the complement of $\mathcal{O}:=(U \cup D\cup B)\cup\overline{(U \cup D\cup B)}$. This way the symmetric tiling can be partitioned into tilings of two congruent dented semihexagons that are images of each other through the central symmetry (see Figure \ref{basecase1}(b)). This means that
\[\M_c(CS_{2b,y}(U;D;B))=\M(T_{2b+m,y+u+d}(\mathcal{O}^c\cup U \cup \overline{D})).\]
Similarly, we get
\[\M_c(CS_{2b,y}(U';D';B))=\M(T_{2b+m,y+u+d}(\mathcal{O}^c\cup U' \cup \overline{D'})).\]
Then(\ref{maineqrefine})  follows again from Cohn--Larsen--Propp's formula (\ref{CLPeq}), after performing a straight forward simplification.

For the induction step, we assume that $u+d>0$, $x\geq 2b+2$, $y\geq 2$, and that the identity (\ref{maineqrefine}) holds for any  $CS$-type regions whose sum of $x$-, $y$-, $u$-, and $d$-parameters is strictly less than $x+y+u+d$. We first use Kuo condensation in Theorem \ref{ciukuo1} to set up a recurrence for  the left-hand side of (\ref{maineqrefine}), and we show that the expression on the right-hand side satisfies the same recurrence. Then (\ref{maineqrefine}) follows from the induction principle.

\medskip

In the rest of the proof we use the shorthand notation $\mathcal{O}$ for the position set of all `obstacles' (i.e., removed unit triangles and barriers) of the region $CS_{x,y}(U;D;B)$, i.e. $\mathcal{O}:=(U \cup D\cup B)\cup\overline{(U \cup D\cup B)}$.

\medskip

There are four subcases to distinguish here:

\medskip

\begin{figure}\centering
\includegraphics[width=11cm]{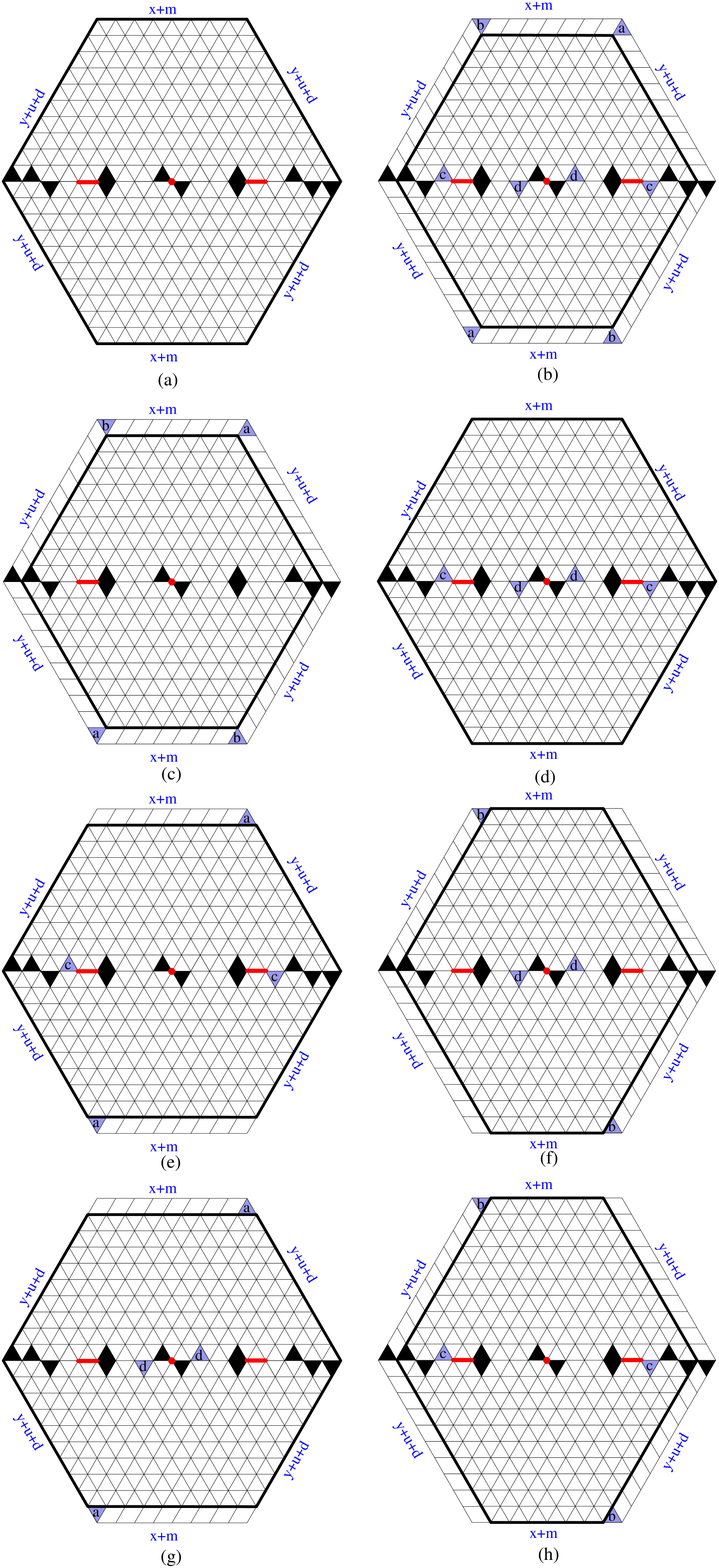}
\caption{How to apply Kuo condensation in the case when $1\in U$ and $1\notin D$.}\label{SymmetricFern1}
\end{figure}

\textbf{Case 1. $1\in U\setminus D$.}
\
\\

We apply Kuo condensation in Theorem \ref{ciukuo1} to the dual graph $G$ of the region $CS_{x,y}(U;D;B)$ with the choice of the eight vertices $a_1,a_2,b_1,b_2,c_1,c_2,d_1,d_2$ shown in Figure \ref{SymmetricFern1}(b). We assume, by convention, that  the dual graph of a centrally symmetric region is also centrally symmetric, in particular $G$ is centrally symmetric. More precisely, the figure shows the positions of the unit triangles in the region $CS_{x,y}(U;D;B)$ that correspond to the vertices $a_1,a_2,b_1,b_2,c_1,c_2,d_1,d_2$ of $G$.  The triangles corresponding to $a_1, a_2$ are both labelled by $a$, the ones corresponding to $b_1,b_2$ are both labelled by $b$, and so on. In particular, the unit triangle corresponding to the vertices $a_1,a_2$ are the up-pointing shaded triangle on the northeast corner and the down-pointing shaded one on the southwest corner of the region. The $b_1$- and $b_2$-triangles are the shaded unit triangles on the northwest and southeast corners. We pick the $c_1$- and $c_2$-triangles along the horizontal axis $l$ at the first and the last positions in  $\mathcal{O}^c$, and $d_1$- and $d_2$-triangles at the last position before the symmetric center and the first position after the symmetric center that are in  $\mathcal{O}^c$. Assume that the first position, that is not in $\mathcal{O}$, is $\alpha$, and the last position before the symmetric center, that is not in $\mathcal{O}$, is $\beta$. We have in particular $1<\alpha\leq  \beta$.

We note that, as we assuming that $2b+1<x$, the $c_1,c_2,d_1,d_2$-triangles are well-defined. We also note that $\alpha$ and $\beta$ may be equal, and this does not violate the structure of our doubly-dented hexagons.

\medskip

For a set $S$, we use the shorthand notations $u S$, $v S$, and $uvS$ for the unions $S\cup \{u\}$, $S\cup \{v\}$ and $S\cup \{u,v\}$, respectively.

Let us consider the region corresponding to the graph $G_{abcd}$, i.e., the region obtained from $CS_{x,y}(U;D;B)$ by removing the eight unit triangles corresponding to the eight vertices $a_1,a_2,b_1,b_2, c_1,c_2,d_1,d_2$. The removal of $a_1$-, $a_2$-, $b_1$-, $b_2$-triangles yields forced lozenges along the boundary of the region, while the removal of $c_1$-, $c_2$-, $d_1$-, $d_2$-triangles creates new `dents' along the horizontal axis $l$ of our doubly-dented hexagon. After removing forced lozenges, the leftover region is a new doubly-dented hexagon. The new $U$-index set is $(\alpha U\setminus\{1\})^\leftarrow$, where we use the notation $S^{\leftarrow}$ for the index set obtained from shifting all elements in $S$ a unit to the left (provided that $1\notin S$). Similarly, the $D$-index set of the new region is now $\beta D^{\leftarrow}$, and the $B$-index set is now $B^{\leftarrow}$. More precisely, our new region is $CS_{x-2,y-2}((\alpha U\setminus \{1\})^\leftarrow; \beta D^{\leftarrow};B^{\leftarrow})$. As the removal of forced lozenges does not change the tiling number,  we get
\begin{equation}\label{eq1b}
\M_c(G_{abcd})=\M_c(CS_{x-2,y-2}((\alpha U\setminus \{1\})^\leftarrow; \beta D^{\leftarrow}; B^{\leftarrow})).
\end{equation}
Working similarly on the regions corresponding with the other terms in the recurrence (\ref{kuoeq}) as shown in Figures \ref{SymmetricFern1}(c)--(h), we have
\begin{equation}\label{eq1c}
\M_c(G_{ab})=\M_c(CS_{x,y}((U\setminus \{1\})^\leftarrow; D^{\leftarrow}; B^{\leftarrow})),
\end{equation}
\begin{equation}\label{eq1d}
\M_c(G_{cd})=\M_c(CS_{x-2,y-2}(\alpha U; \beta D: B)),
\end{equation}
\begin{equation}\label{eq1e}
\M_c(G_{ac})=\M_c(CS_{x,y-2}(\alpha U; D; B)),
\end{equation}
\begin{equation}\label{eq1f}
\M_c(G_{bd})=\M_c(CS_{x-2,y}((U\setminus \{1\})^\leftarrow; \beta D^{\leftarrow}; B^{\leftarrow})),
\end{equation}
\begin{equation}\label{eq1g}
\M_c(G_{ad})=\M_c(CS_{x,y-2}(U; \beta D; B)),
\end{equation}
\begin{equation}\label{eq1h}
\M_c(G_{bc})=\M_c(CS_{x-2,y}((\alpha U\setminus \{1\})^\leftarrow; D^{\leftarrow};B^{\leftarrow})).
\end{equation}
Plugging the above equations to the recurrence in Theorem \ref{ciukuo1}, we get the following recurrence for the left-hand side of  (\ref{maineqrefine}):
\begin{align}\label{recurrence1}
\M_c(CS_{x,y}(U,D;B))&\M_c(CS_{x-2,y-2}((\alpha U\setminus \{1\})^\leftarrow; \beta D^{\leftarrow};B^{\leftarrow}))\notag\\
&=\M_c(CS_{x,y}((U\setminus \{1\})^\leftarrow; D^{\leftarrow}; B^{\leftarrow}))\M_c(CS_{x-2,y-2}(\alpha U; \beta D; B))\notag\\
&+\M_c(CS_{x,y-2}(\alpha U; D; B))\M_c(CS_{x-2,y}((U\setminus \{1\})^\leftarrow; \beta D^{\leftarrow}; B^{\leftarrow}))\notag\\
&+\M_c(CS_{x,y-2}(U; \beta D; B))\M_c(CS_{x-2,y}((\alpha U\setminus \{1\})^\leftarrow; D^{\leftarrow}; B^{\leftarrow})).
\end{align}

Next, we show that the expression on the right-hand side of (\ref{maineqrefine}) also satisfies  recurrence (\ref{recurrence1}). Equivalently, we need to verify
\begin{align}\label{recurrence1c}
&\frac{\frac{\Delta(((U\setminus \{1\})\cup \overline{D})^{\leftarrow})}{\Delta(((U'\setminus \{1\})\cup \overline{D'})^{\leftarrow})}\M_c(CS_{x,y}((U'\setminus \{1\})^\leftarrow; D'^{\leftarrow}; B^{\leftarrow}))\frac{\Delta(\alpha U\cup \overline{\beta D})}{\Delta(\alpha U'\cup \overline{\beta D'})}\M_c(CS_{x-2,y-2}(\alpha U'; \beta D'; B))}{\frac{\Delta(U\cup \overline{D})}{\Delta(U'\cup \overline{D'})}\M_c(CS_{x,y}(U';D';B))\frac{\Delta(((\alpha U\setminus \{1\})\cup \overline{\beta D})^{\leftarrow})}{\Delta(((\alpha U'\setminus \{1\})\cup \overline{\beta D'})^{\leftarrow})}\M_c(CS_{x-2,y-2}((\alpha U'\setminus \{1\})^\leftarrow; \beta D'^{\leftarrow};B^{\leftarrow}))}+\notag\\
&+\frac{\frac{\Delta(\alpha U\cup \overline{D})}{\Delta(\alpha U'\cup \overline{D}')}\M_c(CS_{x,y-2}(\alpha U'; D'; B))\frac{\Delta(((U\setminus \{1\})\cup \overline{\beta D})^{\leftarrow})}{\Delta(((U'\setminus \{1\})\cup \overline{\beta D'})^{\leftarrow})}\M_c(CS_{x-2,y}((U'\setminus \{1\})^{\leftarrow}; \beta D'^{\leftarrow}; B^{\leftarrow}))}{\frac{\Delta(U\cup \overline{D})}{\Delta(U'\cup \overline{D'})}\M_c(CS_{x,y}(U';D';B))\frac{\Delta(((\alpha U\setminus \{1\})\cup \overline{\beta D})^{\leftarrow})}{\Delta(((\alpha U'\setminus \{1\})\cup \overline{\beta D'})^{\leftarrow})}\M_c(CS_{x-2,y-2}((\alpha U'\setminus \{1\})^\leftarrow; \beta D'^{\leftarrow};B^{\leftarrow}))}\notag\\
&+\frac{\frac{\Delta(U\cup \overline{\beta D})}{\Delta(U'\cup \overline{\beta D'})}\M_c(CS_{x,y-2}(U'; \beta D'; B))\frac{\Delta(((\alpha U\setminus \{1\})\cup \overline{D})^{\leftarrow})}{\Delta(((\alpha U'\setminus \{1\})\cup \overline{D'})^{\leftarrow})}\M_c(CS_{x-2,y}((\alpha U'\setminus \{1\})^\leftarrow; D'^{\leftarrow}; B^{\leftarrow}))}{\frac{\Delta(U\cup \overline{D})}{\Delta(U'\cup \overline{D'})}\M_c(CS_{x,y}(U';D';B))\frac{\Delta(((\alpha U\setminus \{1\})\cup \overline{\beta D})^{\leftarrow})}{\Delta(((\alpha U'\setminus \{1\})\cup \overline{\beta D'})^{\leftarrow})}\M_c(CS_{x-2,y-2}((\alpha U'\setminus \{1\})^\leftarrow; \beta D'^{\leftarrow};B^{\leftarrow}))}=1.
\end{align}
We note that, as shown in Figures \ref{SymmetricFern1}(b), (c), (f), and (h), the index sets of removed up-pointing triangles in the regions $CS_{x-2,y-2}((\alpha U\setminus \{1\})^\leftarrow; \beta D^{\leftarrow};B^{\leftarrow})$, $CS_{x,y}((U\setminus \{1\})^\leftarrow; D^{\leftarrow}; B^{\leftarrow})$, $CS_{x-2,y}((U\setminus \{1\})^\leftarrow; \beta D^{\leftarrow}; B^{\leftarrow})$, $CS_{x-2,y}((\alpha U\setminus \{1\})^\leftarrow; D^{\leftarrow}; B^{\leftarrow})$ are respectively $((\alpha U\setminus \{1\})\cup \overline{\beta D})^{\leftarrow}$, $((U\setminus \{1\})\cup \overline{D})^{\leftarrow}$, $((\alpha U'\setminus \{1\})\cup \overline{\beta D'})^{\leftarrow}$, and $((\alpha U\setminus \{1\})\cup \overline{D})^{\leftarrow}$.

\medskip

We claim that
\begin{claim}\label{clm1}
\begin{equation}\label{ratio1}
\dfrac{\Delta(((U\setminus \{1\})\cup \overline{D})^{\leftarrow})}{\Delta(((U'\setminus \{1\})\cup \overline{D'})^{\leftarrow})}\dfrac{\Delta(\alpha U\cup \overline{\beta D})}{\Delta(\alpha U'\cup \overline{\beta D'})}=\dfrac{\Delta(U\cup \overline{D})}{\Delta(U'\cup \overline{D'})}\dfrac{\Delta(((\alpha U\setminus \{1\})\cup \overline{\beta D})^{\leftarrow})}{\Delta(((\alpha U'\setminus \{1\})\cup \overline{\beta D'})^{\leftarrow})},
\end{equation}

\begin{equation}\label{ratio2}
\dfrac{\Delta(\alpha U\cup \overline{D})}{\Delta(\alpha U'\cup \overline{D}')}\dfrac{\Delta(((U\setminus \{1\})\cup \overline{\beta D})^{\leftarrow})}{\Delta(((U'\setminus \{1\})\cup \overline{\beta D'})^{\leftarrow})}=\dfrac{\Delta(U\cup \overline{D})}{\Delta(U'\cup \overline{D'})}\dfrac{\Delta(((\alpha U\setminus \{1\})\cup \overline{\beta D})^{\leftarrow})}{\Delta(((\alpha U'\setminus \{1\})\cup \overline{\beta D'})^{\leftarrow})},
\end{equation}
 and
\begin{equation}\label{ratio3}
\dfrac{\Delta(U\cup \overline{\beta D})}{\Delta(U'\cup \overline{\beta D'})}\dfrac{\Delta(((\alpha U\setminus \{1\})\cup \overline{D})^{\leftarrow})}{\Delta(((\alpha U'\setminus \{1\})\cup \overline{D'})^{\leftarrow})}=\dfrac{\Delta(U\cup \overline{D})}{\Delta(U'\cup \overline{D'})}\dfrac{\Delta(((\alpha U\setminus \{1\})\cup \overline{\beta D})^{\leftarrow})}{\Delta(((\alpha U'\setminus \{1\})\cup \overline{\beta D'})^{\leftarrow})}.
\end{equation}
\end{claim}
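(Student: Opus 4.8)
The plan is to reduce all three identities (\ref{ratio1})--(\ref{ratio3}) to a single elementary cancellation, using two structural features of the Vandermonde product $\Delta$. First, $\Delta$ is \emph{translation invariant}: since $\Delta(S)=\prod_{i<j}(x_j-x_i)$ depends only on the pairwise differences of elements of $S$, shifting every element one unit to the left leaves it unchanged, so $\Delta(S^{\leftarrow})=\Delta(S)$ whenever $S^{\leftarrow}$ is defined. This lets me erase every $\leftarrow$ superscript occurring inside a $\Delta$ in (\ref{ratio1})--(\ref{ratio3}). Second, for a finite set $S$ and a point $p\notin S$ one has the single-insertion formula $\Delta(S\cup\{p\})/\Delta(S)=\prod_{s\in S}|p-s|$, since inserting $p$ into the sorted order of $S$ contributes exactly the factors $|p-s|$.

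The key bookkeeping step is to rewrite everything in terms of the up-triangle index set $X:=U\cup\overline{D}$ (and its primed analogue $X':=U'\cup\overline{D'}$). Writing $p:=\alpha$ and $q:=\overline{\beta}=x+y+2n+1-\beta$, and using that in Case 1 we have $1\in U\setminus D$ (so $1\in X$ while $1\notin\overline{D}$) together with $\alpha,\beta\in\mathcal{O}^c$ (so $p,q\notin X$ and $p\neq q$), the various sets become $\alpha U\cup\overline{D}=X\cup\{p\}$, $U\cup\overline{\beta D}=X\cup\{q\}$, $\alpha U\cup\overline{\beta D}=X\cup\{p,q\}$, $(U\setminus\{1\})\cup\overline{D}=X\setminus\{1\}$, and $(\alpha U\setminus\{1\})\cup\overline{\beta D}=(X\setminus\{1\})\cup\{p,q\}$, with the mixed terms handled the same way. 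After these substitutions each of (\ref{ratio1})--(\ref{ratio3}) takes the shape ``[function of $X$] $=$ [same function of $X'$]''.

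I would then finish by checking that each of these functions is in fact constant in its set argument. Using the single-insertion formula and cancelling the common product $\prod_{s\in X,\,s\neq1}$, a direct computation gives
\[
\frac{\Delta(X\setminus\{1\})\,\Delta(X\cup\{p,q\})}{\Delta(X)\,\Delta((X\setminus\{1\})\cup\{p,q\})}=|p-1|\,|q-1|,
\]
\[
\frac{\Delta(X\cup\{p\})\,\Delta((X\setminus\{1\})\cup\{q\})}{\Delta(X)\,\Delta((X\setminus\{1\})\cup\{p,q\})}=\frac{|p-1|}{|p-q|},
\]
\[
\frac{\Delta(X\cup\{q\})\,\Delta((X\setminus\{1\})\cup\{p\})}{\Delta(X)\,\Delta((X\setminus\{1\})\cup\{p,q\})}=\frac{|q-1|}{|q-p|},
\]
each right-hand side depending only on $p$ and $q$. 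Since the obstacle set $\mathcal{O}=(U\cup D\cup B)\cup\overline{(U\cup D\cup B)}$ --- and hence the distinguished positions $\alpha,\beta$, i.e.\ $p,q$ --- is identical for the unprimed and primed regions (because $U\cup D=U'\cup D'$ and $B$ is common), the constants associated with $X$ and with $X'$ coincide. Equating the $X$- and $X'$-expressions then yields (\ref{ratio1}), (\ref{ratio2}), (\ref{ratio3}) in turn.

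The only genuine work here is the translation into the $X$-notation and the verification of the membership facts ($1\in X$, $\,p,q\notin X$, $\,p\neq q$), all of which follow from $1\in U\setminus D$, from $\alpha,\beta\in\mathcal{O}^c$, and from the fact that $\overline{\beta}$ is far to the right of every element of $U$. Once the single-insertion formula is in hand the algebra collapses to a constant, so I expect the bookkeeping of the set operations --- rather than any analytic difficulty --- to be the main hurdle.
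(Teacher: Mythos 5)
Your proposal is correct and takes essentially the same route as the paper: both erase the shifts using translation invariance of $\Delta$ and then cancel common factors so that each cross-ratio of $\Delta$'s collapses to a constant depending only on $\alpha$ and $\overline{\beta}$ --- your $|p-1|\,|q-1|$, $|p-1|/|p-q|$, $|q-1|/|q-p|$ are exactly the paper's $(\alpha-1)(\overline{\beta}-1)$, $|\alpha-1|/|\overline{\beta}-\alpha|$, $|1-\overline{\beta}|/|\alpha-\overline{\beta}|$. The only difference is organizational: your single-insertion formula and the $X=U\cup\overline{D}$ bookkeeping package the cancellation a bit more systematically than the paper's direct term-by-term computation.
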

\begin{proof}[Proof of Claim \ref{clm1}]
Let us verify (\ref{ratio1}). As the index shifting does not have any effects on the operation $\Delta$, we can rewrite (\ref{ratio1}) as
\begin{equation}
\frac{\Delta(U\setminus \{1\}\cup \overline{D})\Delta(\alpha U\cup \overline{\beta D})}{\Delta(U\cup \overline{D})\Delta(\alpha U\setminus \{1\}\cup \overline{\beta D})}=\frac{\Delta(U'\setminus \{1\}\cup \overline{D'})\Delta(\alpha U'\cup \overline{\beta D'})}{\Delta(U'\cup \overline{D'})\Delta(\alpha U'\setminus \{1\}\cup \overline{\beta D'})}.
\end{equation}
Let us simplify the fraction on the left-hand side:
\begin{align}
&\frac{\Delta(U\setminus \{1\}\cup \overline{D})\Delta(\alpha U\cup \overline{\beta D})}{\Delta(U\cup \overline{D})\Delta(\alpha U\setminus \{1\}\cup \overline{\beta D})}\notag\\
&=\frac{\Delta(U\setminus \{1\}\cup \overline{D})}{\Delta(U\cup \overline{D})}\frac{\Delta(\alpha U\cup \overline{\beta D})}{\Delta(\alpha U\setminus \{1\}\cup \overline{\beta D})}\notag\\
&=\frac{(\alpha-1)(\overline{\beta}-1)\prod_i(s_i-1)\prod_j(t_j-1)}{\prod_i(s_i-1)\prod_j(\overline{t_j}-1)}\notag\\
&=(\alpha-1)(\overline{\beta}-1).
\end{align}
The second equality holds by cancelling out the common terms in the numerator and denominator of each fraction after the first equality sign.
Here, for any index $i$ of $[x+y+2n]$,  we use the notation $\overline{i}$ for the image of $i$ through the central symmetry, i.e. $\overline{i}:=(x+y+2n+1)-i$. Similarly, one can simplify the  faction on the right-hand side to $(\alpha-1)(\overline{\beta}-1)$, and (\ref{ratio1}) follows.

Let us consider (\ref{ratio2}). As the index shifting has no effect on the operation $\Delta$, we can write (\ref{ratio2}) as
\begin{equation}
\frac{\Delta(\alpha U\cup \overline{D})\Delta(U\setminus \{1\}\cup \overline{\beta D})}{\Delta(U\cup \overline{D})\Delta(\alpha U\setminus \{1\}\cup \overline{\beta D})}= \frac{\Delta(\alpha U'\cup \overline{D}')\Delta(U'\setminus \{1\}\cup \overline{\beta D'})}{\Delta(U'\cup \overline{D'})\Delta(\alpha U'\setminus \{1\}\cup \overline{\beta D'})}.
\end{equation}
The fraction on the left-hand side can be simplified as
\begin{align}
&\frac{\Delta(\alpha U\cup \overline{D})\Delta(U\setminus \{1\}\cup \overline{\beta D})}{\Delta(U\cup \overline{D})\Delta(\alpha U\setminus \{1\}\cup \overline{\beta D})}\notag\\
&=\frac{\prod_{i}|s_i-\alpha|\prod_{j}|\overline{t_j}-\alpha| }{|\overline{\beta}-\alpha|\prod_{i>1}|s_i-\alpha|\prod_{j}|\overline{t_j}-\alpha|}\notag\\
&=\frac{|\alpha-1|}{|\overline{\beta}-\alpha|}.
\end{align}
Similarly, the right-hand side is equal to $\frac{|\alpha-1|}{|\overline{\beta}-\alpha|}$, and  (\ref{ratio2}) follows.

Finally, let us work on (\ref{ratio3}). Eliminating the shifting sign and rearranging, we have (\ref{ratio3}) is equivalent to
\begin{equation}\label{ratio3b}
\frac{\Delta(U\cup \overline{\beta D})\Delta(\alpha U\setminus \{1\}\cup \overline{D})}{\Delta(U\cup \overline{D})\Delta(\alpha U\setminus \{1\}\cup \overline{\beta D})}=\frac{\Delta(U'\cup \overline{\beta D'})\Delta(\alpha U'\setminus \{1\}\cup \overline{D'})}{\Delta(U'\cup \overline{D'})\Delta(\alpha U'\setminus \{1\}\cup \overline{\beta D'})}.
\end{equation}
The left-hand side can be simplified as
\begin{align}
&\frac{\Delta(U\cup \overline{\beta D})\Delta(\alpha U\setminus \{1\}\cup \overline{D})}{\Delta(U\cup \overline{D})\Delta(\alpha U\setminus \{1\}\cup \overline{\beta D})}\notag\\
&=\frac{\prod_{i}|s_i-\overline{\beta}|\prod_{j}|t_j-\overline{\beta}|}{|\alpha-\overline{\beta}|\prod_{i>1}|s_i-\overline{\beta}|\prod_{j}|t_j-\overline{\beta}|}\notag\\
&=\frac{|1-\overline{\beta}|}{|\alpha-\overline{\beta}|}.
\end{align}
Similarly, the right-hand side is also equal to $\frac{|1-\overline{\beta}|}{|\alpha-\overline{\beta}|}$, and (\ref{ratio3}) follows.
\end{proof}

By (\ref{ratio1})--(\ref{ratio3}),  we have (\ref{recurrence1c}) reduced to
\begin{align}\label{recurrence1d}
&\frac{\M_c(CS_{x,y}((U'\setminus \{1\})^\leftarrow; D'^{\leftarrow}; B^{\leftarrow}))\M_c(CS_{x-2,y-2}(\alpha U'; \beta D'; B))}{\M_c(CS_{x,y}(U',D';B))\M_c(CS_{x-2,y-2}((\alpha U'\setminus \{1\})^\leftarrow; \beta D'^{\leftarrow};B^{\leftarrow}))}+\notag\\
&+\frac{\M_c(CS_{x,y-2}(\alpha U'; D'; B))\M_c(CS_{x-2,y}((U\setminus \{1\})^\leftarrow; \beta D'^{\leftarrow}; B^{\leftarrow}))}{\M_c(CS_{x,y}(U',D';B))\M_c(CS_{x-2,y-2}((\alpha U'\setminus \{1\})^\leftarrow; \beta D'^{\leftarrow};B^{\leftarrow}))}\notag\\
&+\frac{\M_c(CS_{x,y-2}(U'; \beta D'; B))\M_c(CS_{x-2,y}((\alpha U'\setminus \{1\})^\leftarrow; D'^{\leftarrow}; B^{\leftarrow}))}{\M_c(CS_{x,y}(U',D';B))\M_c(CS_{x-2,y-2}((\alpha U'\setminus \{1\})^\leftarrow; \beta D'^{\leftarrow};B^{\leftarrow}))}=1,
\end{align}
equivalently,
\begin{align}\label{recurrence1e}
\M_c(CS_{x,y}(U';D';B))&\M_c(CS_{x-2,y-2}((\alpha U'\setminus \{1\})^\leftarrow; \beta D'^{\leftarrow};B^{\leftarrow}))\notag\\
&=\M_c(CS_{x,y}((U'\setminus \{1\})^\leftarrow; D'^{\leftarrow}; B^{\leftarrow}))\M_c(CS_{x-2,y-2}(\alpha U'; \beta D'; B))\notag\\
&+\M_c(CS_{x,y-2}(\alpha U'; D'; B))\M_c(CS_{x-2,y}((U'\setminus \{1\})^\leftarrow; \beta D'^{\leftarrow}; B^{\leftarrow}))\notag\\
&+\M_c(CS_{x,y-2}(U'; \beta D'; B))\M_c(CS_{x-2,y}((\alpha U'\setminus \{1\})^\leftarrow; D'^{\leftarrow}; B^{\leftarrow})),
\end{align}
which follows directly from the application of recurrence (\ref{recurrence1}) to the region $CS_{x,y}(U';D';B)$. This finishes our verification that the right-hand side of (\ref{maineqrefine}) satisfies  recurrence (\ref{recurrence1}) and finishes our proof in the case when $1\in U\setminus D$.

\bigskip

\textbf{Case 2. $1\in D\setminus U$.} This case can be reduced to Case 1 treated above by rotating the region $180$ degree.

\bigskip

\textbf{Case 3. $1\notin U \cup D$.}

If $1 \in B$, then there are forced lozenges along the northwest and southwest side of the region. After removing these forced lozenges, our region becomes $CS_{x-2,y}(U^{\leftarrow}; D^{\leftarrow}; (B\setminus \{1\})^{\leftarrow})$, and (\ref{maineqrefine}) follows from the induction hypothesis. Without loss of generality, we assume in the rest of Case 3 that $1\notin B$.

\bigskip

\begin{figure}\centering
\includegraphics[width=11cm]{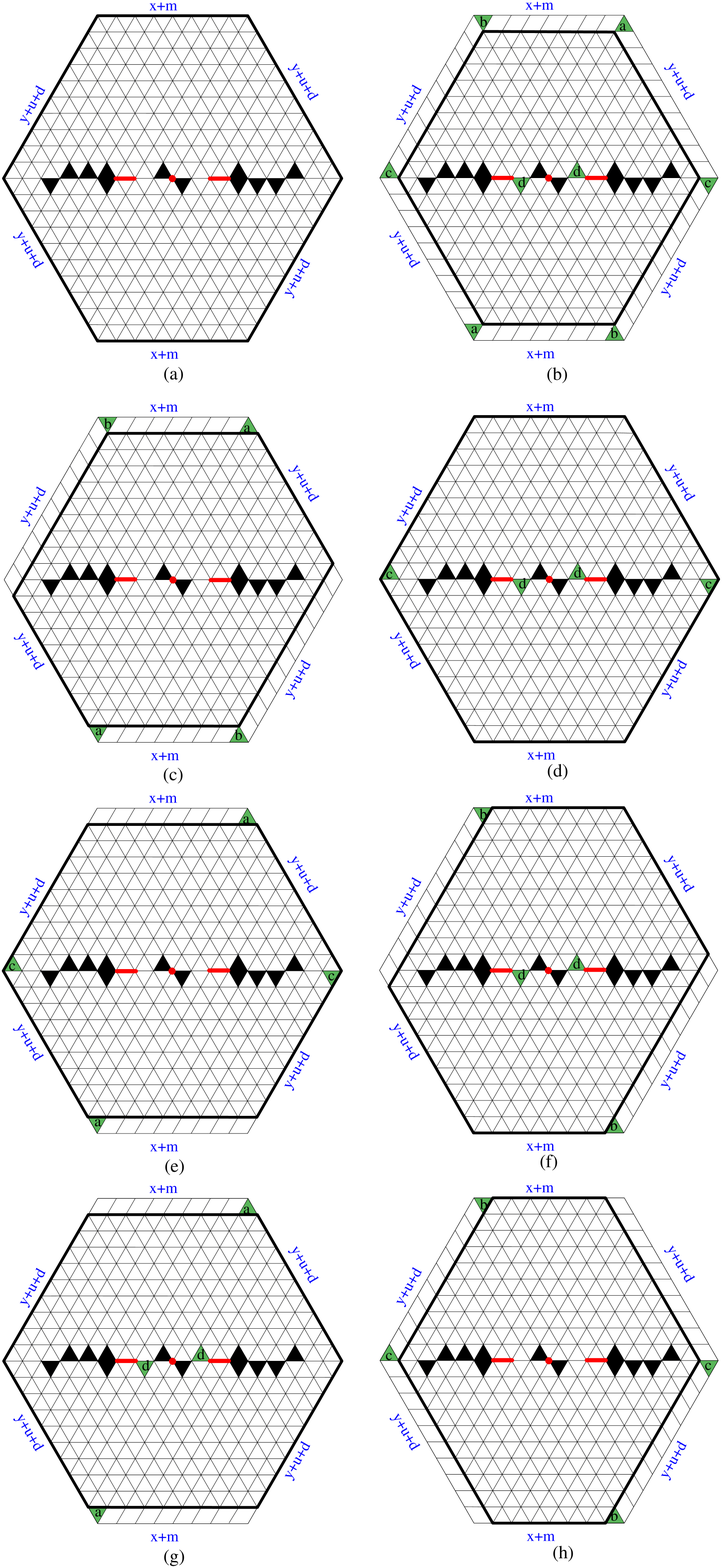}
\caption{How to apply Kuo condensation in the case when $1\notin U\cup D$.}\label{SymmetricFern2}
\end{figure}

\begin{figure}\centering
\includegraphics[width=11cm]{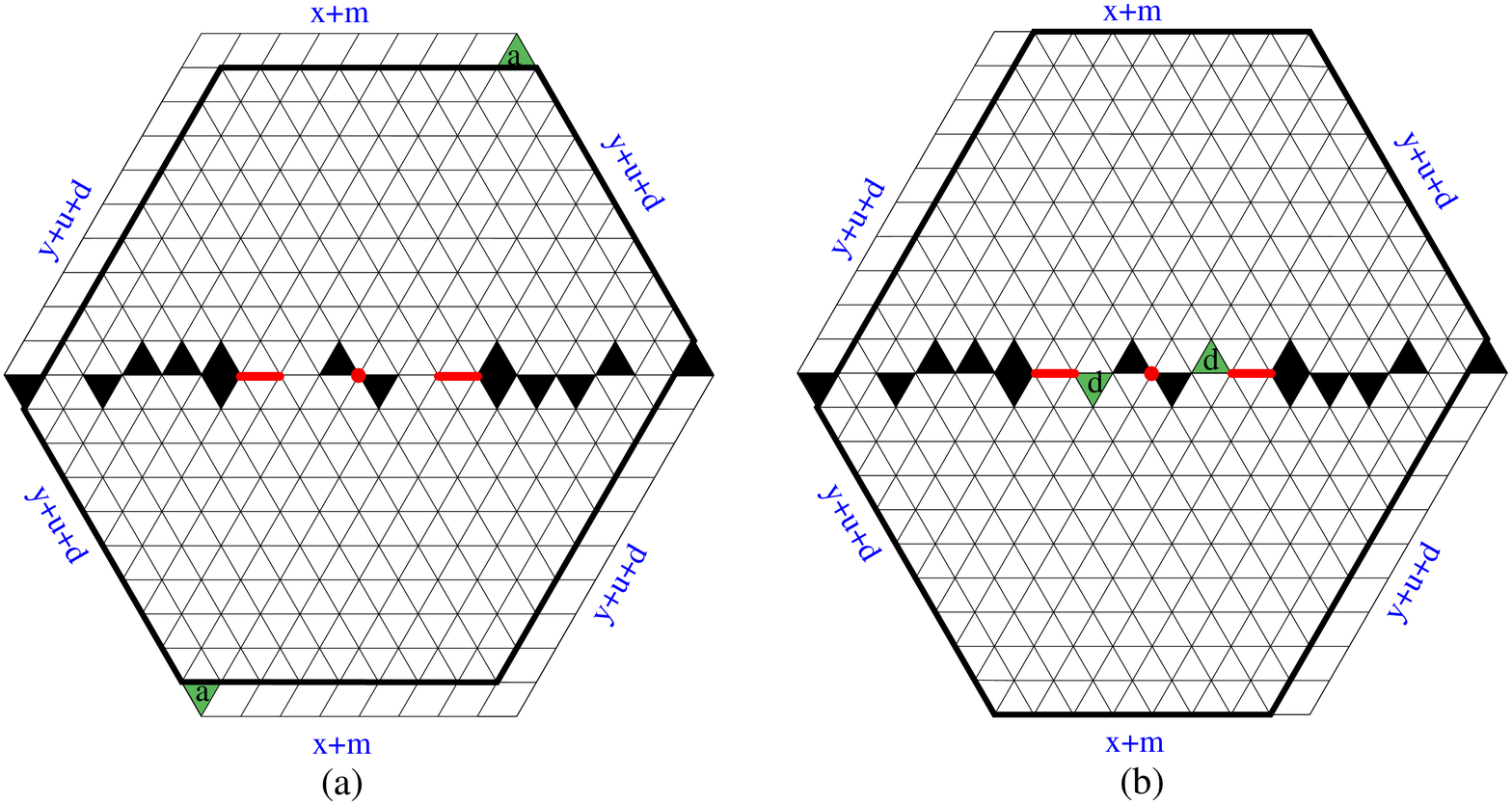}
\caption{Reforming the leftover regions in the case of the regions corresponding to (a) $G_{ab}$ and (b) $G_{bd}$.}\label{SymmetricFern2b}
\end{figure}

Similar to Case 1, we apply Kuo condensation in Theorem \ref{ciukuo1} with the choice of the eight vertices $a_1,a_2,b_1,b_2,c_1,c_2,d_1,d_2$ as shown in Figure \ref{SymmetricFern2}(b). We note that the choice of these vertices are the same as in Case 1, as we now have $\alpha=1$.

Consider the region corresponding with the graph $G_{abcd}$. The removal of the unit triangles corresponding to $a_1,a_2,b_1,b_2,c_1,c_2$ yields forced lozenges around the boundary of the region (see Figure \ref{SymmetricFern2}(b)). After removing the forced lozenges from the region, we get the region $CS_{x-2,y-2}(U^{\leftarrow};\beta D^{\leftarrow}; B^{\leftarrow})$. This means that
\begin{equation}\label{eq2b}
\M_c(G_{abcd})=\M_c(CS_{x-2,y-2}(U^{\leftarrow};\beta D^{\leftarrow}; B^{\leftarrow})).
\end{equation}
Working similarly on the regions corresponding with the others terms in the recurrence (\ref{kuoeq}) as shown in Figures \ref{SymmetricFern1}(c)--(h), we have
\begin{equation}\label{eq2c}
\M_c(G_{ab})=\M_c(CS_{x,y-2}(U; 1D; B))
\end{equation}
(note that $\alpha=1$ in this case),
\begin{equation}\label{eq2d}
\M_c(G_{cd})=\M_c(CS_{x-2,y-2}(1U; \beta D;B)),
\end{equation}
\begin{equation}\label{eq2e}
\M_c(G_{ac})=\M_c(CS_{x,y-2}(1U; D;B)),
\end{equation}
\begin{equation}\label{eq2f}
\M_c(G_{bd})=\M_c(CS_{x-2,y-2}(U; 1\beta D;B)),
\end{equation}
\begin{equation}\label{eq2g}
\M_c(G_{ad})=\M_c(CS_{x,y-2}(U; \beta D;B)),
\end{equation}
\begin{equation}\label{eq2h}
\M_c(G_{bc})=\M_c(CS_{x-2,y}(U^{\leftarrow};D^{\leftarrow};B^{\leftarrow})).
\end{equation}
Strictly speaking, the leftover regions after removing forced lozenges in the cases of $G_{ab}$ and $G_{bd}$, as shown in Figures \ref{SymmetricFern2}(c) and (f),  are \emph{not} doubly--dented regions. However, they have the same tiling numbers as the regions
$CS_{x,y-2}(U; 1D)$ and $CS_{x-2,y-2}(U; 1\beta D)$, respectively. Indeed, as shown in Figures \ref{SymmetricFern2b}(a) and (b), the latter two regions, after removing forced lozenges, are congruent with the leftover regions in  Figures \ref{SymmetricFern2}(c) and (f). This yields idenitities (\ref{eq2c}) and (\ref{eq2f}) above.

Therefore, by plugging the above  identities to the recurrence in Theorem \ref{ciukuo1}, we have the following recurrence:
\begin{align}\label{recurrence2}
&\M_c(CS_{x,y}(U,D;B))\M_c(CS_{x-2,y-2}(U^{\leftarrow};\beta D^{\leftarrow};B^{\leftarrow}))=\M_c(CS_{x,y-2}(U; 1D;B))\M_c(CS_{x-2,y-2}(1U; \beta D;B))\notag\\
&+\M_c(CS_{x,y-2}(1U; D;B))\M_c(CS_{x-2,y-2}(U; 1\beta D;B))+\M_c(CS_{x,y-2}(U; \beta D;B))\M_c(CS_{x-2,y}(U^{\leftarrow}; D^{\leftarrow};B^{\leftarrow})).
\end{align}

Next, we show that the right-hand side of (\ref{maineqrefine}) also satisfies recurrence (\ref{recurrence2}) above. Equivalently, we need to verify that:
\begin{align}\label{recurrence2c}
&\frac{\frac{\Delta(U\cup \overline{1D})}{\Delta(U'\cup \overline{1D'})}\M_c(CS_{x,y-2}(U'; 1D';B))\frac{\Delta(1U\cup \overline{\beta D})}{\Delta(1U'\cup \overline{\beta D'})}\M_c(CS_{x-2,y-2}(1U'; \beta D';B))}{\frac{\Delta(U\cup \overline{D})}{\Delta(U'\cup \overline{D'})}\M_c(CS_{x,y}(U',D';B))\frac{\Delta((U\cup \overline{\beta D})^{\leftarrow})}{\Delta((U'\cup \overline{\beta D'})^{\leftarrow})}\M_c(CS_{x-2,y-2}(U'^{\leftarrow};\beta D'^{\leftarrow};B^{\leftarrow}))}\notag\\
&+\frac{\frac{\Delta(1U\cup \overline{D})}{\Delta(1U'\cup \overline{D'})}\M_c(CS_{x,y-2}(1U'; D';B))\frac{\Delta(U\cup \overline{1\beta D})}{\Delta(U'\cup \overline{1\beta D'})}\M_c(CS_{x-2,y-2}(U'; 1\beta D';B))}{\frac{\Delta(U\cup \overline{D})}{\Delta(U'\cup \overline{D'})}\M_c(CS_{x,y}(U',D';B))\frac{\Delta((U\cup \overline{\beta D})^{\leftarrow})}{\Delta((U'\cup \overline{\beta D'})^{\leftarrow})}\M_c(CS_{x-2,y-2}(U'^{\leftarrow};\beta D'^{\leftarrow};B^{\leftarrow}))}\notag\\
&+\frac{\frac{\Delta(U\cup \overline{\beta D})}{\Delta(U'\cup \overline{\beta D'})}\M_c(CS_{x,y-2}(U'; \beta D';B))\frac{\Delta((U\cup \overline{1D})^{\leftarrow})}{\Delta((U'\cup \overline{1D'})^{\leftarrow})}\M_c(CS_{x-2,y}(U'^{\leftarrow}; D'^{\leftarrow};B^{\leftarrow}))}{\frac{\Delta(U\cup \overline{D})}{\Delta(U'\cup \overline{D'})}\M_c(CS_{x,y}(U',D';B))\frac{\Delta((U\cup \overline{\beta D})^{\leftarrow})}{\Delta((U'\cup \overline{\beta D'})^{\leftarrow})}\M_c(CS_{x-2,y-2}(U'^{\leftarrow};\beta D'^{\leftarrow};B^{\leftarrow}))}=1.
\end{align}
Here, by Figure \ref{SymmetricFern2}(b) and (h),  we have the $U$-index sets of the regions $CS_{x-2,y-2}(U^{\leftarrow};\beta D^{\leftarrow};B^{\leftarrow})$ and $CS_{x-2,y}(U^{\leftarrow}; D^{\leftarrow};B^{\leftarrow})$ are respectively $(U\cup \overline{\beta D})^{\leftarrow}$ and $(U\cup \overline{\beta D})^{\leftarrow}$.

We have a claim
\begin{claim}\label{clm2}
\begin{equation}\label{ratio4}
\dfrac{\Delta(U\cup \overline{1D})}{\Delta(U'\cup \overline{1D'})}\dfrac{\Delta(1U\cup \overline{\beta D})}{\Delta(1U'\cup \overline{\beta D'})}=\dfrac{\Delta(U\cup \overline{D})}{\Delta(U'\cup \overline{D'})}\dfrac{\Delta(U^{\leftarrow}\cup \overline{\beta D^{\leftarrow}})}{\Delta(U'^{\leftarrow}\cup \overline{\beta D'^{\leftarrow}})},
\end{equation}
\begin{equation}\label{ratio5}
\dfrac{\Delta(1U\cup \overline{D})}{\Delta(1U'\cup \overline{D'})}\dfrac{\Delta(U\cup \overline{1\beta D})}{\Delta(U'\cup \overline{1\beta D'})}=\dfrac{\Delta(U\cup \overline{D})}{\Delta(U'\cup \overline{D'})}\dfrac{\Delta(U^{\leftarrow}\cup \overline{\beta D^{\leftarrow}})}{\Delta(U'^{\leftarrow}\cup \overline{\beta D'^{\leftarrow}})},
\end{equation}
and
\begin{equation}\label{ratio6}
\dfrac{\Delta(U\cup \overline{\beta D})}{\Delta(U'\cup \overline{\beta D'})}\dfrac{\Delta(U\cup \overline{1D})}{\Delta(U'\cup \overline{1D'})}=\dfrac{\Delta(U\cup \overline{D})}{\Delta(U'\cup \overline{D'})}\dfrac{\Delta(U^{\leftarrow}\cup \overline{\beta D^{\leftarrow}})}{\Delta(U'^{\leftarrow}\cup \overline{\beta D'^{\leftarrow}})}.
\end{equation}
\end{claim}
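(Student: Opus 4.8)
The plan is to prove Claim \ref{clm2} exactly as Claim \ref{clm1} was proved: treat (\ref{ratio4}), (\ref{ratio5}) and (\ref{ratio6}) one at a time, each by a direct simplification of the $\Delta$-factors. Write $M:=x+y+2n+1$, so that $\overline i=M-i$ and $\overline 1=M-1$. The first move, just as in Case 1, is to discard every $\leftarrow$: a global shift of all indices changes no pairwise difference and hence leaves $\Delta$ unchanged, so each $\Delta(S^{\leftarrow})$ may be replaced by $\Delta(S)$. After this, every set occurring in Claim \ref{clm2} is the ``folded'' index set $V:=U\cup\overline D$ (respectively $V':=U'\cup\overline{D'}$) with one or two of the extra indices $1,\overline 1,\beta,\overline\beta$ adjoined, these being the positions of the $c$- and $d$-dents created by the condensation.

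Next I would cross-multiply each identity so that all unprimed quantities stand on the left and all primed ones on the right, reducing it to a relation $F(U,D)=F(U',D')$. Using repeatedly the elementary expansions
\[
\Delta(V\cup\{a\})=\Delta(V)\prod_{v\in V}|a-v|,\qquad
\Delta(V\cup\{a,b\})=\Delta(V)\,|a-b|\prod_{v\in V}|a-v|\,|b-v|,
\]
the common factors $\Delta(V)$ cancel and the unprimed side of (\ref{ratio4}) collapses to a single constant $|1-\overline\beta|$ times the product $\prod_{v\in V}g(v)$, where $g(v):=|\overline 1-v|\,|1-v|=(M-1-v)(v-1)$. Here the standing assumption of Case 3, that $1\notin U\cup D$ and $1\notin B$, guarantees $1,\overline 1\notin V$, so no factor vanishes and $V=U\sqcup\overline D$ is a genuine disjoint union.

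The step I expect to be the crux is showing that this residual product is shuffle-invariant, that is, $\prod_{v\in V}g(v)=\prod_{v'\in V'}g(v')$. This is \emph{not} a termwise cancellation, since $V$ and $V'$ genuinely differ once the triangles of $U\Delta D$ are reshuffled; indeed $\Delta(V)$ and $\Delta(V')$ need not agree. The point is that the weight $g$ is invariant under the central involution $v\mapsto\overline v=M-v$, because $g(\overline v)=\bigl(M-1-(M-v)\bigr)\bigl((M-v)-1\bigr)=(v-1)(M-1-v)=g(v)$. Combined with the disjoint splitting $V=U\sqcup\overline D$ this gives
\[
\prod_{v\in V}g(v)=\prod_{s\in U}g(s)\prod_{t\in D}g(\overline t)=\prod_{s\in U}g(s)\prod_{t\in D}g(t)=\prod_{e\in U\uplus D}g(e),
\]
a quantity that depends only on the multiset $U\uplus D$. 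Since $U\cup D=U'\cup D'$ and $U\cap D=U'\cap D'$ force $U\uplus D=U'\uplus D'$ as multisets, this product is unchanged by the shuffle, and (\ref{ratio4}) follows.

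The remaining two identities are handled by the identical computation, only the bookkeeping of which of $1,\overline 1,\beta,\overline\beta$ is adjoined to which copy of $V$ changing. For (\ref{ratio5}) the cancellation again leaves the same symmetric weight $g$, now multiplied by the constant $|\overline 1-\overline\beta|=\beta-1$, so the multiset argument above applies verbatim; for (\ref{ratio6}) the adjoined corrections cancel directly and the two sides reduce to the same product of $\Delta$'s, so the identity is immediate. In every case the leftover constant is literally the same on the primed and unprimed sides, because $\alpha$ and $\beta$ are determined by the obstacle set $\mathcal O=(U\cup D\cup B)\cup\overline{(U\cup D\cup B)}$, which the shuffle leaves fixed. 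Granting the symmetry $g(\overline v)=g(v)$ and this matching of constants, the multiset invariance closes Claim \ref{clm2}.
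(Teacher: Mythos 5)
Your treatment of (\ref{ratio4}) and (\ref{ratio5}) is correct and is essentially the paper's own argument: drop the shifts, expand each $\Delta$ by adjoining the extra indices to $V=U\cup\overline{D}$, cancel the common $\Delta(V)$, and observe that the residual weight $g(v)=|\overline{1}-v|\,|1-v|$ satisfies $g(\overline{v})=g(v)$, so the leftover product depends only on the multiset $U\uplus D=U'\uplus D'$, while the constants $|1-\overline{\beta}|$ and $|\overline{1}-\overline{\beta}|$ agree on the two sides because $\beta$ is determined by the shuffle-invariant obstacle set $\mathcal{O}$. This is exactly how the paper verifies (\ref{ratio4}), and what it means by ``completely analogous'' for (\ref{ratio5}).

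The treatment of (\ref{ratio6}) is where the proposal fails. You assert that ``the adjoined corrections cancel directly,'' but they do not for the identity as printed: the left side adjoins $\{\overline{\beta}\}$ and $\{\overline{1}\}$ while the right side adjoins $\emptyset$ and $\{\overline{\beta}\}$, so after cancelling the common factor $\Delta(U\cup\overline{\beta D})/\Delta(U'\cup\overline{\beta D'})$ from both sides, (\ref{ratio6}) reduces to
\begin{equation*}
\prod_{v\in U\cup\overline{D}}\bigl|\overline{1}-v\bigr|=\prod_{v\in U'\cup\overline{D'}}\bigl|\overline{1}-v\bigr|,
\qquad\text{i.e.}\qquad
\prod_{s\in U}(\overline{1}-s)\prod_{t\in D}(t-1)=\prod_{s'\in U'}(\overline{1}-s')\prod_{t'\in D'}(t'-1).
\end{equation*}
The single weight $|\overline{1}-v|$ is \emph{not} invariant under $v\mapsto\overline{v}$ (that invariance of the paired weight $g$ is precisely what makes (\ref{ratio4}) and (\ref{ratio5}) work), so the multiset argument does not apply; indeed the displayed identity is false in general, e.g.\ $U=\{2\}$, $D=\{3\}$, $U'=\{3\}$, $D'=\{2\}$ gives $2(M-3)$ versus $M-4$ with $M=x+y+2n+1$. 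The underlying issue is that the $U$-index set of the $G_{bc}$-region $CS_{x-2,y}(U^{\leftarrow};D^{\leftarrow};B^{\leftarrow})$ is $(U\cup\overline{D})^{\leftarrow}$, not $(U\cup\overline{1D})^{\leftarrow}$; with $\overline{D}$ in place of $\overline{1D}$ in the second factor on the left of (\ref{ratio6}), the two sides do coincide term by term and the identity is immediate, which is evidently what is intended. As written, however, your argument neither carries out the computation nor flags this discrepancy, so the proof of the third identity does not stand: you need to correct the statement (and the corresponding factor in the recurrence) rather than claim a cancellation that does not occur.
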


\begin{proof}[Proof of Claim \ref{clm2}]
We verify (\ref{ratio4}) first. Eliminate shifting operation and rewrite (\ref{ratio4}) as
\begin{equation}\label{ratio4b}
\dfrac{\Delta(U\cup \overline{1D})\Delta(1U\cup \overline{\beta D})}{\Delta(U\cup \overline{D})\Delta(U\cup \overline{\beta D})}=\frac{\Delta(U'\cup \overline{1D'})\Delta(1U'\cup \overline{\beta D'})}{\Delta(U'\cup \overline{D'})\Delta(U'\cup \overline{\beta D'})}
\end{equation}
The fraction on the left-hand side can be simplified as
\begin{align}
&\dfrac{\Delta(U\cup \overline{1D})\Delta(1U\cup \overline{\beta D})}{\Delta(U\cup \overline{D})\Delta(U\cup \overline{\beta D})}\notag\\
&=|1- \overline{\beta}|\prod_i|\overline{1}-s_i|\prod_j|\overline{1}-\overline{t_j}| \prod_{i}|1-s_i|\prod_j|1-\overline{t_j}|.
\end{align}
Doing similarly with the second fraction, we have the right-hand side of (\ref{ratio4b}) becomes
\begin{align}
|1- \overline{\beta}|\prod_i|\overline{1}-s'_i|\prod_j|\overline{1}-\overline{t'_j}| \prod_{i}|1-s'_i|\prod_j|1-\overline{t'_j}|
\end{align}
As the reflections do not  change the difference between indices, we have left- and right-hand sides of (\ref{ratio4b}) are respectively equal to
\begin{equation}
|1- \overline{\beta}|\prod_i|\overline{1}-s_i|\prod_j|\overline{1}-t_j|
\prod_j|1-t_j| \prod_{i}|1-s_i|
\end{equation}
and
\begin{equation}
|1- \overline{\beta}|\prod_i|\overline{1}-s'_i|\prod_j|\overline{1}-t'_j|\prod_j|1-t'_j| \prod_{i}|1-s'_i|.
\end{equation}
Moreover, since $U\cup D=U'\cup D'$ and $U\cap D=U'\cap D'$,  we have
\[\prod_i|\overline{1}-s_i|\prod_j|\overline{1}-t_j|=\prod_i|\overline{1}-s'_i|\prod_j|\overline{1}-t'_j|\]
and
\[\prod_j|1-t_j| \prod_{i}|1-s_i|=\prod_j|1-t'_j| \prod_{i}|1-s'_i|.\]
This implies that two sides of (\ref{ratio4b}) are equal, and so does (\ref{ratio4}).

Identities (\ref{ratio5}) and (\ref{ratio6}) can be verified in a completely analogous manner.
\end{proof}

By (\ref{ratio4})--(\ref{ratio6}),  (\ref{recurrence2c}) is equivalent to
\begin{align}\label{recurrence2d}
&\frac{\M_c(CS_{x,y-2}(U'; 1D';B))\M_c(CS_{x-2,y-2}(1U'; \beta D';B))}{\M_c(CS_{x,y}(U',D';B))\M_c(CS_{x-2,y-2}(U'^{\leftarrow};\beta D'^{\leftarrow};B^{\leftarrow}))}+
\frac{\M_c(CS_{x,y-2}(1U'; D';B))\M_c(CS_{x-2,y-2}(U'; 1\beta D';B))}{\M_c(CS_{x,y}(U';D';B))\M_c(CS_{x-2,y-2}(U'^{\leftarrow};\beta D'^{\leftarrow};B^{\leftarrow}))}\notag\\
&+\frac{\M_c(CS_{x,y-2}(U'; \beta D';B))\M_c(CS_{x-2,y}(U'; D';B))}{\M_c(CS_{x,y}(U';D';B))\M_c(CS_{x-2,y-2}(U'^{\leftarrow};\beta D'^{\leftarrow};B^{\leftarrow}))}=1,
\end{align}
or equivalently,
{\small\begin{align}\label{recurrence2e}
&\M_c(CS_{x,y}(U';D';B))\M_c(CS_{x-2,y-2}(U'^{\leftarrow};\beta D'^{\leftarrow};B^{\leftarrow}))=\M_c(CS_{x,y-2}(U'; 1D';B))\M_c(CS_{x-2,y-2}(1U'; \beta D';B))\notag\\
&+\M_c(CS_{x,y-2}(1U'; D';B))\M_c(CS_{x-2,y-2}(U'; 1\beta D';B))+\M_c(CS_{x,y-2}(U'; \beta D';B))\M_c(CS_{x-2,y}(U'^{\leftarrow}; D'^{\leftarrow};B^{\leftarrow})).
\end{align}}
which follows from the application of recurrence (\ref{recurrence2}) to the region $CS_{x,y}(U';D';B)$. This finishes our verification that the right-hand side of (\ref{maineqrefine}) satisfies  recurrence (\ref{recurrence2}), and its finishes the proof in the case $1\notin U\cup D$.

\bigskip

\textbf{Case 4. $1\in U \cap D$.}

\bigskip

\begin{figure}\centering
\includegraphics[width=11cm]{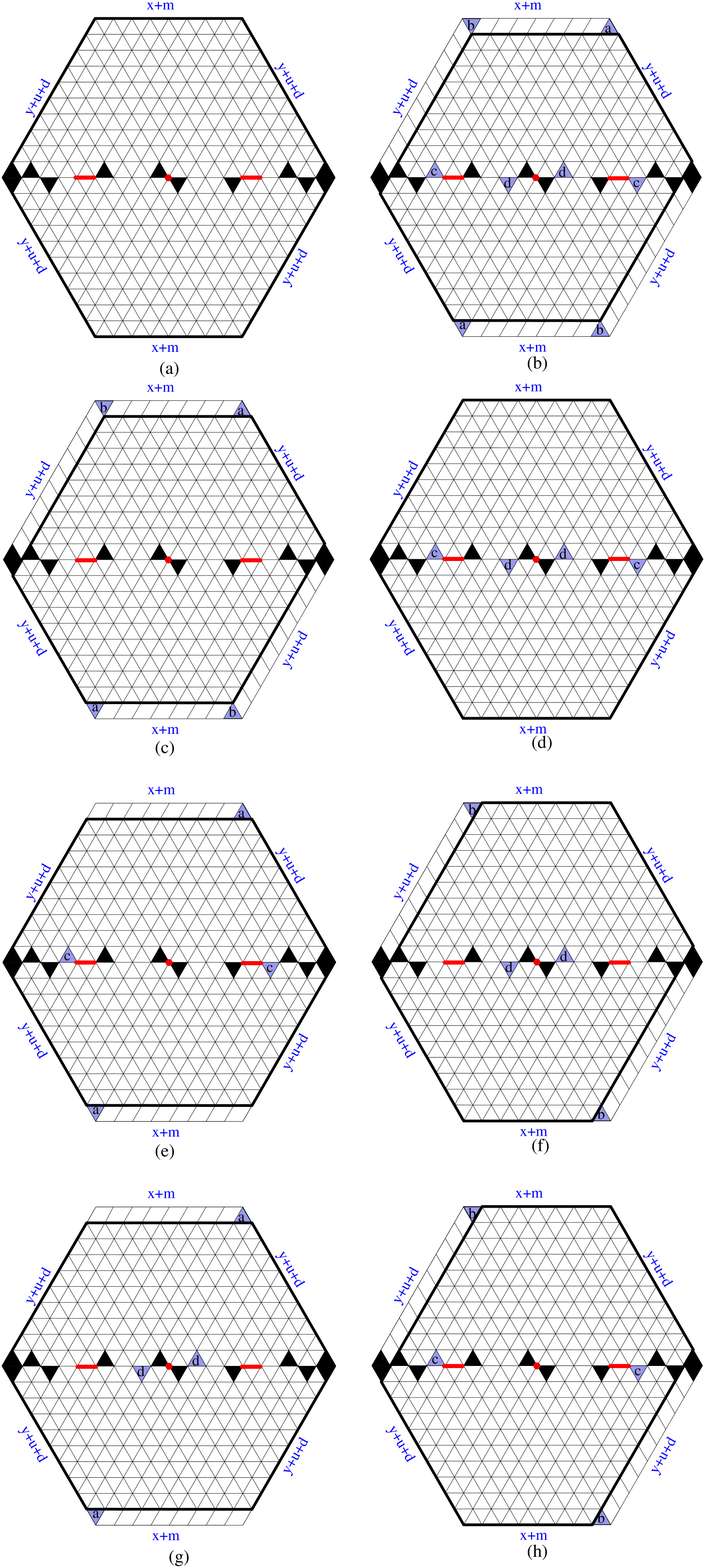}
\caption{How to apply Kuo condensation in the case when $1\in U\cap D$.}\label{SymmetricFern3}
\end{figure}

\begin{figure}\centering
\includegraphics[width=11cm]{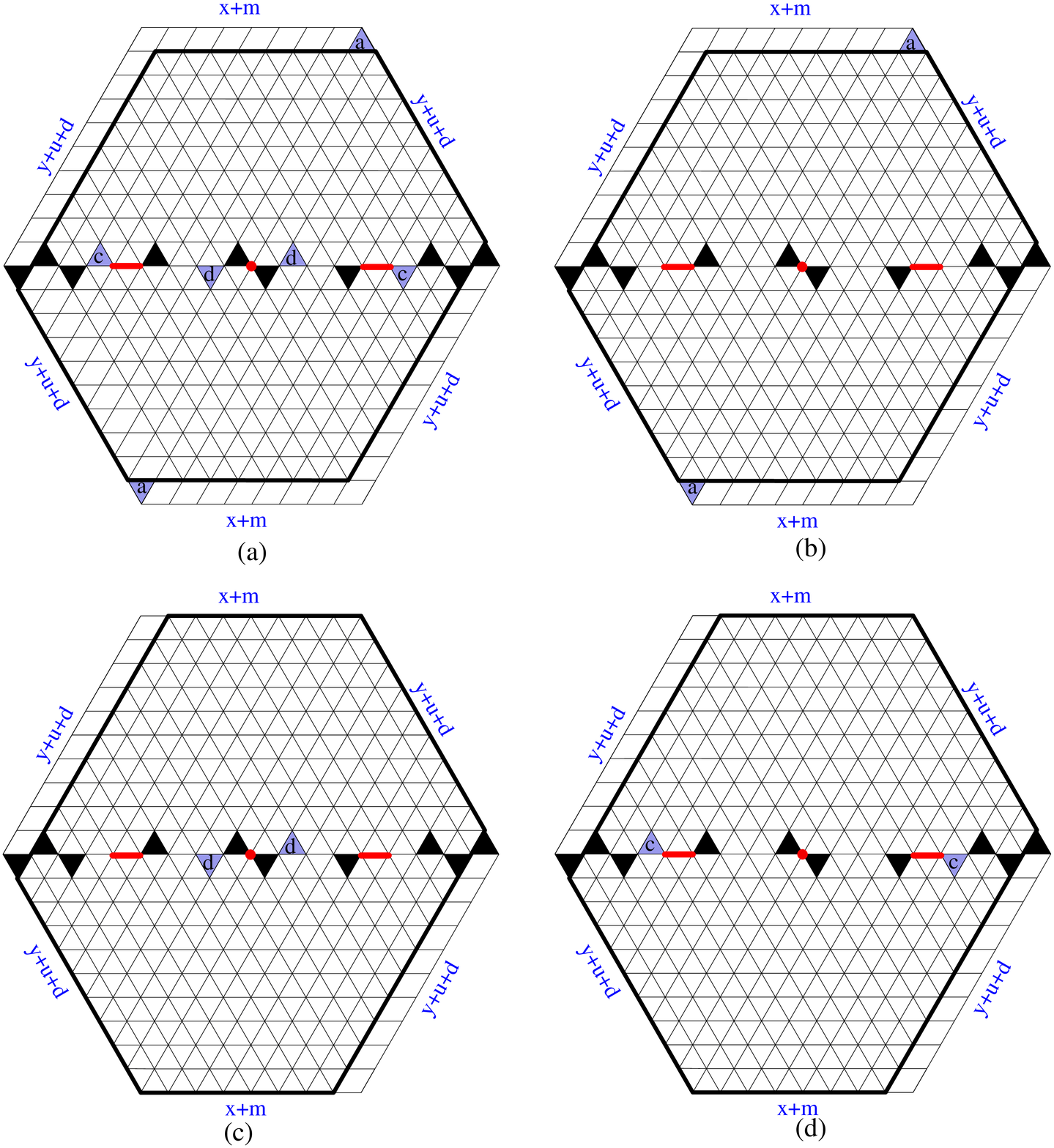}
\caption{Reforming the leftover regions in the case of the regions corresponding to (a) $G_{abcd}$, (b) $G_{ab}$, (c) $G_{bd}$, and (d) $G_{bc}$.}\label{SymmetricFern3b}
\end{figure}

We also apply Kuo condensation in Theorem \ref{ciukuo1} with the choice of the eight vertices as shown in Figure \ref{SymmetricFern3}(b). Consider the region corresponding with the graph $G_{abcd}$. The removal of the unit triangles corresponding to $a_1,a_2,b_1,b_2$ yields forced lozenges along the north, southeast, south, and northwest sides of the region (see Figure \ref{SymmetricFern3}(b)). After removing the forced lozenges from the region, we get the region restricted by the bold contour. This leftover region is not yet a doubly-dented hexagon, however, as shown in Figure \ref{SymmetricFern3b}(a), it has the same tiling number as the region $CS_{x-2,y-2}(\alpha U\setminus \{1\};\beta D;B)$. This means that
\begin{equation}\label{eq3b}
\M_c(G_{abcd})=\M_c(CS_{x-2,y-2}(\alpha U\setminus \{1\};\beta D;B)).
\end{equation}
Working similarly on the regions corresponding with the others terms in the recurrence (\ref{kuoeq}) as shown in Figures \ref{SymmetricFern1}(c)--(h), we have
\begin{equation}\label{eq3c}
\M_c(G_{ab})=\M_c(CS_{x,y}(U\setminus \{1\}; D;B)),
\end{equation}
\begin{equation}\label{eq3d}
\M_c(G_{cd})=\M_c(CS_{x-2,y-2}(\alpha U; \beta D;B)),
\end{equation}
\begin{equation}\label{eq3e}
\M_c(G_{ac})=\M_c(CS_{x,y-2}(\alpha U; D;B)),
\end{equation}
\begin{equation}\label{eq3f}
\M_c(G_{bd})=\M_c(CS_{x-2,y}(U\setminus \{1\}; \beta D;B)),
\end{equation}
\begin{equation}\label{eq3g}
\M_c(G_{ad})=\M_c(CS_{x,y-2}(U; \beta D;B)),
\end{equation}
\begin{equation}\label{eq3h}
\M_c(G_{bc})=\M_c(CS_{x-2,y}(\alpha U\setminus \{1\};D;B)).
\end{equation}
Strictly speaking, similar to the case of the graph $G_{abcd}$, the leftover regions after removing forced lozenges from the regions corresponding to $G_{ab}$, $G_{bd}$, and $G_{bc}$ shown in Figures \ref{SymmetricFern3}(c), (f) and (h) are \emph{not} doubly--dented regions. However, they have the same tiling numbers as the regions
$CS_{x,y}(U\setminus \{1\}; D;B)$, $CS_{x-2,y}(U\setminus \{1\} ; \beta D;B)$ and $CS_{x-2,y}(\alpha U\setminus \{1\};D;B)$, respectively. Indeed, as shown in Figures \ref{SymmetricFern3b}(b), (c), (d), the latter three regions, after removing forced lozenges, are congruent with the leftover regions in  Figures \ref{SymmetricFern3}(c), (f) and (h). This yields identities (\ref{eq2c}), (\ref{eq2f}) and (\ref{eq2h}) above.

Therefore, we have the recurrence:
\begin{align}\label{recurrence3}
&\M_c(CS_{x,y}(U;D;B))\M_c(CS_{x-2,y-2}(\alpha U\setminus\{ 1\};\beta D;B))=\M_c(CS_{x,y}(U\setminus \{1\}; D;B))\M_c(CS_{x-2,y-2}(\alpha U; \beta D;B))\notag\\
&+\M_c(CS_{x,y-2}(\alpha U; D;B))\M_c(CS_{x-2,y}(U\setminus \{1\} ; \beta D;B))+\M_c(CS_{x,y-2}(U; \beta D;B))\M_c(CS_{x-2,y}(\alpha U\setminus \{1\};D;B)).
\end{align}

\bigskip

To complete the proof in this case, we need to verify that the right-hand side of (\ref{maineqrefine}) satisfy above recurrence  (\ref{recurrence3}). Equivalently, we need to verify:
\begin{align}\label{recurrence3c}
&\frac{\frac{\Delta(U\setminus \{1\}\cup \overline{D})}{\Delta(U'\setminus  \{1\}\cup \overline{D'})}\M_c(CS_{x,y}(U'\setminus  \{1\}; D')) \frac{\Delta(\alpha U\cup \overline{\beta D})}{\Delta(\alpha U'\cup \overline{\beta D'})}\M_c(CS_{x-2,y-2}(\alpha U'; \beta D';B))}{\frac{\Delta(U\cup \overline{D})}{\Delta(U'\cup \overline{D'})}\M_c(CS_{x,y}(U';D';B)) \frac{\Delta(\alpha U\setminus \{1\}\cup \overline{\beta D})}{\Delta(\alpha U'\setminus  \{1\}\cup \overline{\beta D'})}\M_c(CS_{x-2,y-2}(\alpha U'\setminus  \{1\};\beta D';B))}\notag\\
&+\frac{\frac{\Delta(\alpha U\cup \overline{D})}{\Delta(\alpha U'\cup \overline{D'})}\M_c(CS_{x,y-2}(\alpha U'; D';B))\frac{\Delta(U\setminus \{1\}\cup \overline{\beta D})}{\Delta(U'\setminus  \{1\}1\cup \overline{\beta D'})}\M_c(CS_{x-2,y}(U'\setminus \{1\} ; \beta D';B))}{\frac{\Delta(U\cup \overline{D})}{\Delta(U'\cup \overline{D'})}\M_c(CS_{x,y}(U';D';B)) \frac{\Delta(\alpha U\setminus \{1\}\cup \overline{\beta D})}{\Delta(\alpha U'\setminus  \{1\}\cup \overline{\beta D'})}\M_c(CS_{x-2,y-2}(\alpha U'\setminus  \{1\};\beta D';B))}\notag\\
&+\frac{\frac{\Delta(U\cup \overline{\beta D})}{\Delta(U'\cup \overline{\beta D'})}\M_c(CS_{x,y-2}(U; \beta D';B)) \frac{\Delta(\alpha U\setminus \{1\}\cup \overline{D})}{\Delta(\alpha U'\setminus  \{1\}\cup \overline{D'})} \M_c(CS_{x-2,y}(\alpha U\setminus \{1\};D';B))}{\frac{\Delta(U\cup \overline{D})}{\Delta(U'\cup \overline{D'})}\M_c(CS_{x,y}(U';D';B)) \frac{\Delta(\alpha U\setminus \{1\}\cup \overline{\beta D})}{\Delta(\alpha U'\setminus  \{1\}\cup \overline{\beta D'})}\M_c(CS_{x-2,y-2}(\alpha U'\setminus  \{1\};\beta D';B))}=1.
\end{align}

Similar to the cases treated above, we would like to prove the following three identities:
\begin{claim}\label{clm3}
\begin{equation}\label{ratio7}
\dfrac{\Delta(U\setminus \{1\}\cup \overline{D})}{\Delta(U'\setminus \{1\}\cup \overline{D'})}\dfrac{\Delta(\alpha U\cup \overline{\beta D})}{\Delta(\alpha U'\cup \overline{\beta D'})}=\dfrac{\Delta(U\cup \overline{D})}{\Delta(U'\cup \overline{D'})} \dfrac{\Delta(\alpha U\setminus \{1\}\cup \overline{\beta D})}{\Delta(\alpha U'\setminus \{1\}\cup \overline{\beta D'})},
\end{equation}
\begin{equation}\label{ratio8}
\dfrac{\Delta(\alpha U\cup \overline{D})}{\Delta(\alpha U'\cup \overline{D'})}\dfrac{\Delta(U\setminus \{1\}\cup \overline{\beta D})}{\Delta(U'\setminus \{1\}\cup \overline{\beta D'})}=\dfrac{\Delta(U\cup \overline{D})}{\Delta(U'\cup \overline{D'})} \dfrac{\Delta(\alpha U\setminus \{1\}\cup \overline{\beta D})}{\Delta(\alpha U'\setminus \{1\}\cup \overline{\beta D'})},
\end{equation}
and
\begin{equation}\label{ratio9}
\dfrac{\Delta(U\cup \overline{\beta D})}{\Delta(U'\cup \overline{\beta D'})} \dfrac{\Delta(\alpha U\setminus \{1\}\cup \overline{D})}{\Delta(\alpha U'\setminus \{1\}\cup \overline{D'})}=\dfrac{\Delta(U\cup \overline{D})}{\Delta(U'\cup \overline{D'})} \dfrac{\Delta(\alpha U\setminus \{1\}\cup \overline{\beta D})}{\Delta(\alpha U'\setminus \{1\}\cup \overline{\beta D'})}.
\end{equation}
\end{claim}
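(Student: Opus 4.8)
The plan is to observe that the three identities (\ref{ratio7})--(\ref{ratio9}) of Claim \ref{clm3} are, term for term, exactly the identities (\ref{ratio1})--(\ref{ratio3}) of Claim \ref{clm1} with the global index-shift operators $(\cdot)^{\leftarrow}$ stripped away. Since $\Delta(S)=\prod_{i<j}(x_j-x_i)$ depends only on the pairwise differences of the elements of $S$, shifting every element of $S$ by the same constant leaves $\Delta(S)$ unchanged; hence $\Delta(S^{\leftarrow})=\Delta(S)$ whenever the shift is defined. Consequently, after this cosmetic reduction, (\ref{ratio7})--(\ref{ratio9}) are verified by the very same computation already carried out for (\ref{ratio1})--(\ref{ratio3}), and I would present them in parallel with that argument.

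Concretely, for each identity I first cross-multiply so that every unprimed set sits on the left-hand side and every primed set on the right, reducing each identity to the assertion that a single fraction built from $U,D$ equals the corresponding fraction built from $U',D'$. I then apply the two elementary facts $\Delta(S\cup\{e\})/\Delta(S)=\prod_{w\in S}|e-w|$ and $\Delta(S\setminus\{e\})/\Delta(S)=1/\prod_{w\in S\setminus\{e\}}|e-w|$. The crucial structural input, inherited from the case hypothesis $1\in U\cap D$, is that $1$ is the unique minimum of every set that appears: indeed $1\in U\subseteq U\cup\overline{D}$, while each element of $\overline{D}$, together with the inserted indices $\alpha$ and $\overline{\beta}$, strictly exceeds $1$. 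Therefore deleting the element $1$ and adjoining the elements $\alpha,\overline{\beta}$ telescopes, and all factors indexed by $U$ and $\overline{D}$ cancel. Each side then collapses to a quantity involving only $1,\alpha,\overline{\beta}$: the left-hand side of (\ref{ratio7}) reduces to $(\alpha-1)(\overline{\beta}-1)$, that of (\ref{ratio8}) to $|\alpha-1|/|\overline{\beta}-\alpha|$, and that of (\ref{ratio9}) to $|1-\overline{\beta}|/|\alpha-\overline{\beta}|$. Because none of these limiting values involves the underlying index sets, the primed right-hand sides reduce to the identical expressions, and the three identities follow.

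I should also record the two admissibility checks that make the cancellation legitimate. First, $\alpha$ and $\overline{\beta}$ are genuinely new elements of the relevant sets: by their definition $\alpha,\beta\in\mathcal{O}^c$, whereas every index in $U\cup\overline{D}$ is the position of a removed up-pointing triangle and hence lies in $\mathcal{O}$, so $\alpha,\overline{\beta}\notin U\cup\overline{D}$. Second, the primed sets admit exactly the same manipulations, since $1\in U\cap D=U'\cap D'$ forces $1\in U'$, so $U'\setminus\{1\}$ is well defined and $1$ is again the common minimum on the primed side; note in particular that, unlike in Claim \ref{clm2}, the set equalities $U\cup D=U'\cup D'$ and $U\cap D=U'\cap D'$ are not even needed here, because the telescoping leaves no residual product indexed by the sets.

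The one point that genuinely differs from Case 1 and that I would flag explicitly is that here $1\in D$ as well as $1\in U$. This might appear to threaten the ``$1$ is the unique minimum'' step, but it does not: the membership $1\in D$ enters only through $\overline{D}$ via $\overline{1}=x+y+2n$, which is the global \emph{maximum} rather than the minimum, and so causes no interference with the cancellation at the bottom of the sets. Once this is observed, the remaining work is routine bookkeeping mirroring the proof of Claim \ref{clm1} line by line, and I expect no further obstacle.
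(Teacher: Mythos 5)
Your proposal is correct and follows exactly the route the paper intends: the paper omits the proof of Claim \ref{clm3}, stating only that it is ``similar to Claim \ref{clm1}'', and your computation is precisely that analogue --- shift-invariance of $\Delta$, telescoping of the factors indexed by $U\cup\overline{D}$ after deleting $1$ and adjoining $\alpha,\overline{\beta}$, and collapse of each side to the set-independent quantities $(\alpha-1)(\overline{\beta}-1)$, $|\alpha-1|/|\overline{\beta}-\alpha|$, and $|1-\overline{\beta}|/|\alpha-\overline{\beta}|$. Your explicit check that the additional membership $1\in D$ only contributes the harmless element $\overline{1}=x+y+2n$ to $\overline{D}$ is a useful detail the paper leaves implicit.
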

Claim \ref{clm3} can be proved similarly to Claim \ref{clm1} and is omitted.
\medskip

By (\ref{ratio7})--(\ref{ratio9}),  we have (\ref{recurrence3c}) rewritten as
{\small \begin{align}\label{recurrence3d}
&\frac{\M_c(CS_{x,y}(U'\setminus \{1\}; D'))\M_c(CS_{x-2,y-2}(\alpha U'; \beta D';B))}{\M_c(CS_{x,y}(U';D';B))\M_c(CS_{x-2,y-2}(\alpha U'\setminus \{1\};\beta D';B))}
+\frac{\M_c(CS_{x,y-2}(\alpha U'; D';B))\M_c(CS_{x-2,y}(U'\setminus\{1\} ; \beta D';B))}{\M_c(CS_{x,y}(U';D';B))\M_c(CS_{x-2,y-2}(\alpha U'\setminus \{1\};\beta D';B))}\notag\\
&+\frac{\M_c(CS_{x,y-2}(U; \beta D';B))\M_c(CS_{x-2,y}(\alpha U\setminus \{1\};D';B))}{\M_c(CS_{x,y}(U';D';B))\M_c(CS_{x-2,y-2}(\alpha U'\setminus \{1\};\beta D';B))}=1,
\end{align}}
equivalently,
{\small \begin{align}\label{recurrence3e}
&\M_c(CS_{x,y}(U';D';B))\M_c(CS_{x-2,y-2}(\alpha U'\setminus \{1\};\beta D';B))=\M_c(CS_{x,y}(U'\setminus \{1\}; D';B))\M_c(CS_{x-2,y-2}(\alpha U'; \beta D';B))\notag\\
&+\M_c(CS_{x,y-2}(\alpha U'; D';B))\M_c(CS_{x-2,y}(U'\setminus \{1\} ; \beta D';B))+\M_c(CS_{x,y-2}(U'; \beta D';B))\M_c(CS_{x-2,y}(\alpha U'\setminus \{1\};D';B)).
\end{align}}
However, this is obtained by applying recurrence (\ref{recurrence3}) to the region $CS_{x,y}(U';D';B)$. This finishes our proof for the case when $x+y$ is even.

\bigskip

\begin{figure}\centering
\includegraphics[width=10cm]{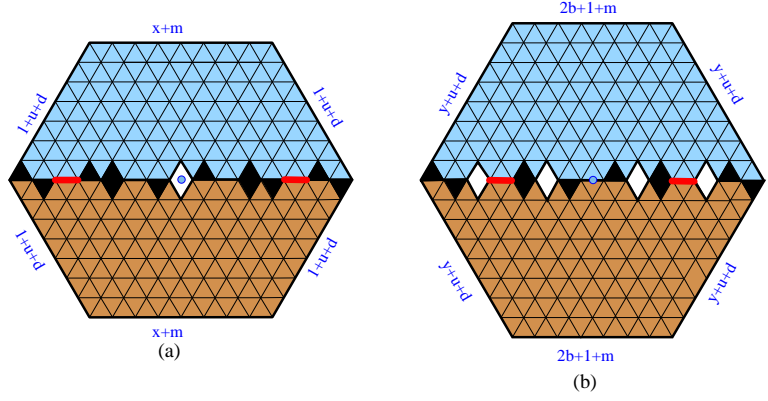}
\caption{Two base cases when $x+y$ is odd: (a) $y=1$, (b) $x=2b+1$.}\label{basecase2}
\end{figure}

Next, we consider the case when $x+y$ is odd, i.e. $x$ and $y$ have opposite parities. This case can be handled similarly to the case when $x+y$ is even.
We also prove (\ref{maineqrefine}) by induction on $x+y+u+d$. The base cases are still the cases: $x<2b+2$, $y<2$, and $u+d=0$.

\begin{figure}\centering
\includegraphics[width=11cm]{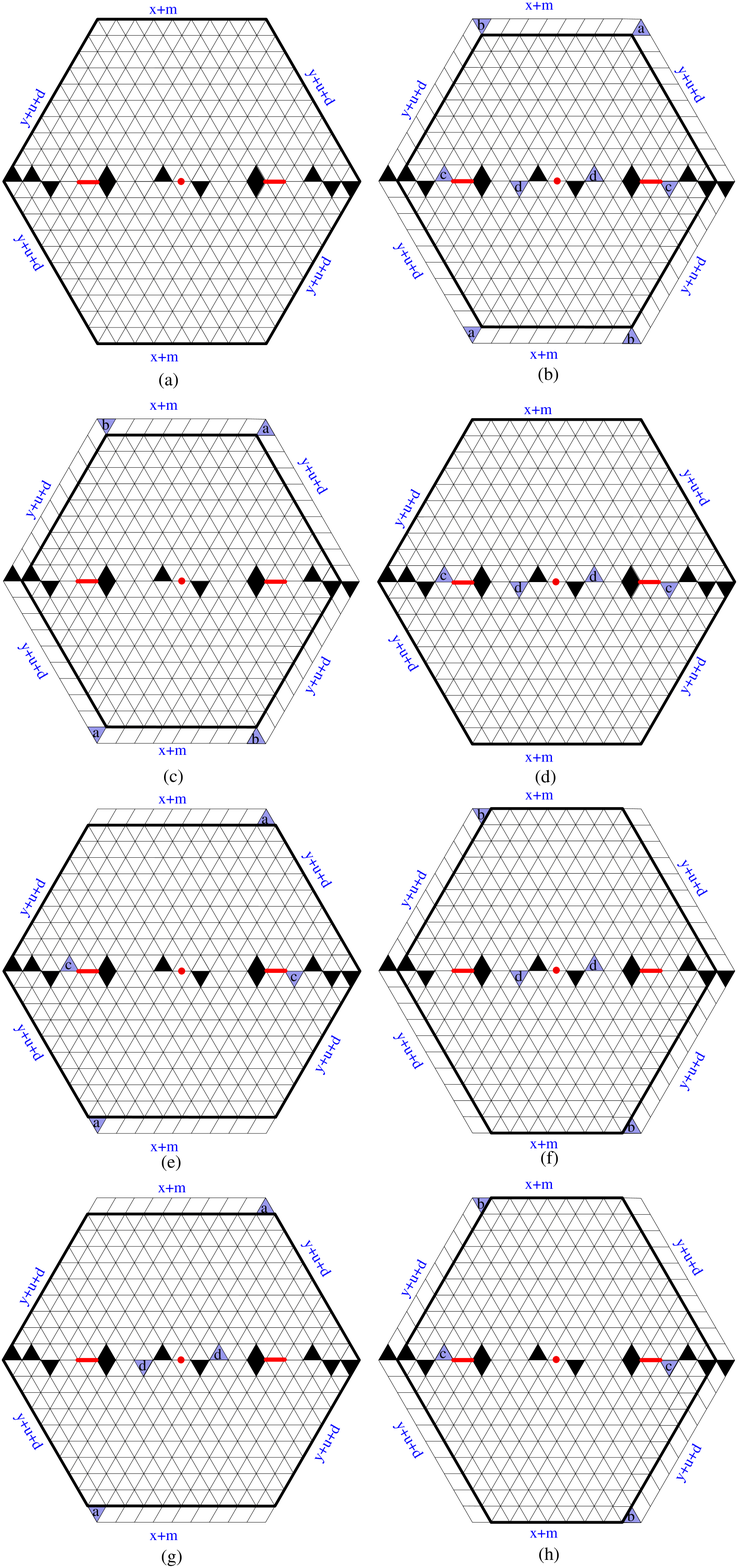}
\caption{How to apply Kuo condensation when $x+y$ is odd.}\label{SymmetricFern4}
\end{figure}

The case $u+d=0$ is still trivial. However,  the case $y<2$ is now divided further into two subcases $y=0$ and $y=1$ (as $y$ now can take odd values). While the case $y=0$ is completely the same as the case treated above when $x+y$ is even, the case $y=1$ is slightly different. When $y=1$, any centrally symmetric tilings of the region must contain a vertical lozenge at the central position (i.e. position $\frac{x+y+2n+1}{2}$). Removing this special forced lozenge, each symmetric tiling can be partitioned into tilings of two congruent dented semihexagons that are images of each other through the central symmetry. It means that
\[M_c(CS_{x,1}(U;D;B))=\M\left(T_{x+m,1+u+d}\left(U\cup \overline{D} \cup\left\{ \frac{x+y+2n+1}{2}\right\}\right)\right).\]
Similarly, we have
\[M_c(CS_{x,1}(U';D';B))=\M\left(T_{x+m,1+u+d}\left(U'\cup \overline{D'} \cup\left\{ \frac{x+y+2n+1}{2}\right\}\right)\right),\]
and (\ref{maineqrefine}) follows directly from Cohn--Larsen--Propp's formula (\ref{CLPeq}).

We next consider the case when $x<2b+2$.  As $x\geq 2b$, $x$ may be $2b$ or $2b+1$ (i.e. $x$ can take an odd value now). The case $x=2b$ is still the same as in the base case $x=2b$ when $x+y$ is even. We now consider the case $x=2b+1$. It is easy to see that, each centrally symmetric tiling of the region contain a vertical lozenge at any positions in $\mathcal{O}^c$, except for the central position. This means that each centrally symmetric tiling of the region $CS_{2b+1,y}(U;D;B)$ can be partitioned into tilings of two congruent dented semihexagons that are images of each other through  the central symmetry (see Figure \ref{basecase2}(b)). In particular, we have
\[\M_c(CS_{2b+1,y}(U;D;B))=\M\left(T_{2b+1+m,y+u+d}\left(\left(\mathcal{O}^c\setminus \left\{\frac{x+y+2n+1}{2}\right\}\right)\cup U\cup \overline{D}\right)\right).\]
Note that, as mentioned above, the middle position must be not in $U\cup D\cup B$ in order for the region has a centrally symmetric tiling. This guarantees that the middle position is always a dent of the dented semihexagon on the right-hand side. Similarly, we have
\[\M_c(CS_{2b+1,y}(U';D';B))=\M\left(T_{2b+1+m,y+u+d}\left(\left(\mathcal{O}^c\setminus \left\{\frac{x+y+2n+1}{2}\right\}\right)\cup U'\cup \overline{D'}\right)\right),\]
 and (\ref{maineqrefine}) follows again from Cohn--Larsen--Propp's formula.

For the induction step, we process exactly the same as that in the case when $x+y$ is even, the only difference is that when the  central position is not in $U$ (and by symmetry, it is also not in $D$), we apply Kuo condensation in Theorem \ref{ciukuo2} (instead of applying Theorem \ref{ciukuo1}) as shown in Figure \ref{SymmetricFern4}, for the case $1\in U\setminus D$; the other cases are similar. We still have the left-hand side of (\ref{maineqrefine}) satisfies the same recurrences as (\ref{recurrence1}), (\ref{recurrence2}), and (\ref{recurrence3}) for the cases $1\in U\setminus D$ (and its $180^{\circ}$ rotation), $1\notin U\cup D$, and $1\in U \cap D$, respectively. The verification that the right-hand side of (\ref{maineqrefine}) satisfies the same recurrences are completely the same as in the case of even $x+y$ treated above.


\end{document}